\definecolor{darkgreen}{rgb}{0,0.6,0}
\definecolor{darkred}{rgb}{0.7,0,0}
\definecolor{darkblue}{rgb}{0,.2,.7}
\renewcommand{\epsilon}{\varepsilon}
\renewcommand{\i}{\mathrm{i}}
\let\theta\vartheta
\let\phi\varphi
\DeclareMathAlphabet{\doba}{U}{msb}{m}{n}
\gdef\mC{\doba{C}}
\gdef\mN{\doba{N}}
\gdef\mS{\doba{S}}
\gdef\mR{\doba{R}}
\def\supp{{\mathop\mathrm{supp}}}
\def\vol{{\mathop\mathrm{vol}}}
\def\vo{{\mathop\mathrm{dvol}}}
\def\d{{\text d}}
\def\a{{\mathfrak{a}}}
\newcommand{\define}{\mathrel{\mathrm{:=}}}
\newtheorem{lem}{Lemma}
\newtheorem{thm}[lem]{Theorem}
\newtheorem{assumption}[lem]{Assumption}
\theoremstyle{definition}
\newtheorem{defi}[lem]{Definition}
\newtheorem{ex}[lem]{Example}
\newtheorem{rem}[lem]{Remark}
\newcommand{\uD}{\mathrm{D}}
\newcommand{\uA}{\mathcal{A}}
\newcommand{\uT}{\mathcal{T}}
\newcommand{\beq}{\begin{equation}}
\newcommand{\eeq}{\end{equation}}
\newcommand{\real}{\mathbb{R}}
\newcommand{\rn}{\mathbb{R}^n}
\newcommand{\nat}{\mathbb{N}}
\newcommand{\F}{F^s_{p,q}(\mathbb{R}^n)}
\newcommand\mtop[2]{\genfrac{}{}{0pt}{}{#1}{#2}}
\begin{document}

\title[Symmetries on manifolds: Generalizations of the Strauss Lemma]{Symmetries on manifolds: Generalizations of the Radial Lemma of Strauss}
\author{Nadine Gro\ss{}e and Cornelia Schneider} 
\subjclass[2010]{46E35, 53C20}
\keywords{Symmetries on manifolds, Besov and Triebel-Lizorkin spaces, Sobolev spaces, atomic decompositions}

\begin{abstract}
For a compact subgroup $G$ of the group of isometries acting on a Riemannian manifold $M$ we investigate subspaces of Besov and Triebel-Lizorkin type which are invariant with respect to the  group action. Our main aim is to extend the classical Strauss lemma under suitable assumptions on the Riemannian manifold by proving that $G$-invariance of functions implies certain  decay properties and better local smoothness. As an application we obtain inequalities of Caffarelli-Kohn-Nirenberg type for $G$-invariant functions. Our results generalize those obtained in \cite{Skr02}. The main tool in our investigations are  atomic decompositions adapted to the $G$-action in combination with trace theorems. 
\end{abstract}

\maketitle

\section{Introduction}

The aim of this paper is to generalize the radial lemma of Strauss. The phenomenon that the presence of symmetries improves Sobolev embeddings and leads to a surprising interplay between regularity and decay properties of radial  functions has first been observed by \textsc{Strauss} in \cite{Str77} and later on studied by several other authors, cf. \textsc{Coleman, Glaser} and \textsc{Martin} \cite{CGM78} or \textsc{Lions} \cite{Lio82}. Similar results for Sobolev spaces with symmetries on Riemannian manifolds were proved by \textsc{Hebey} and \textsc{Vaugon}, cf. \cite{Heb99}, \cite{Heb96}, and \cite{HeVa97}. Results of this kind were applied to nonlinear PDE in \cite{Aub98}, \cite{KuPo97}.   \\
 For radial subspaces  of the Besov and Triebel-Lizorkin spaces, $B^s_{p,q}(\rn)$ and $F^s_{p,q}(\rn)$, respectively,  these kind of problems were first tackled in \cite{SiSk00} and later on extended and generalized in \cite{SSV12} and  \cite{Skr02}, where the latter paper considers closed subgroups $H\subset O(n)$ and corresponding invariant functions on $\rn$. \\
We wish to continue this line of thought here and consider a Riemannian manifold $M$ of bounded geometry (see Definition~\ref{defi_bdgeom} and below) together with a compact subgroup $G$ of the group of isometries acting on $M$ and some assumption on the Riemannian metric (to be more precise, we  assume that the metric is adapted to the $G$-action, see Appendix~\ref{app} and Example~\ref{ex:rn}).  
Questions concerning continuity and compactness of embeddings  for subspaces of Besov and Triebel-Lizorkin type consisting of $G$-invariant functions were already studied in \cite{Skr03}.  Now we investigate the decay properties of $G$-invariant functions $f$ belonging to Besov and Triebel-Lizorkin  spaces  and obtain in Theorem~\ref{thm-strauss} as an extension of the Strauss lemma, that under certain restrictions on the parameters $p$ and $q$ the relation between smoothness and decay  behaves like  
\begin{equation}\label{1}
|f(x)|\leq C \vol_{n-\tilde{n}} (G\cdot x)^{-\frac 1p}\|f|B^s_{p,q}(M)\|, \qquad s>\frac{\tilde{n}}{p}, 
\end{equation}
where $\tilde{n}$ denotes the dimension of the orbit space $M/G$. Similar for the $F$-spaces. Our main tool is an optimal decomposition via heat atoms adapted to the action of the group $G$ taken from \cite{Skr03}, cf.  Theorem~\ref{thm:optimal-dec}.  Furthermore, we show in Theorem~\ref{hoelder-reg} that we have some improved local H\"older smoothness of the $G$-invariant functions belonging to $B^s_{p,q}(M)$ and $F^s_{p,q}(M)$, respectively, whenever $\supp \ f\subset {\tilde{M}}$, where $\tilde{M}$ denotes the principal stratum, cf. Section~\ref{sec:prep}. In particular, we show that in this case 
\[
f\in \mathcal{C}^{s-\frac{\tilde{n}}{p}} (\tilde{M}). 
\]
We give several examples to illustrate our results. 
As an application of \eqref{1} we obtain some Caffarelli-Kohn-Nirenberg inequalities in Theorem~\ref{thm:CKN-ineq},  stating that for $1\leq p<r<q\leq \infty$, $1\leq \tilde{q}\leq \infty$, and certain restrictions on the parameter $s$, we have 
\[
\|\vol_{n-\tilde{n}} (G\cdot x)^{\frac 1p\left(1-\frac rq\right)}f|L_q(M)\|\leq c \|f|B^s_{p,\tilde{q}}(M)\|, \qquad s-\frac np=-\frac nr.
\]
Similar for $F$-spaces. 

The paper is organized as follows. In Section~\ref{sec:2} we recall the basic definitions concerning manifolds with bounded geometry and  the function spaces we are interested in and state related atomic decompositions of the latter. Afterwards, in Section~\ref{sec:prep}, we provide an optimal atomic decomposition adapted to the $G$-action. 
For our further considerations  we will need geometric estimates on the constants coming from the $G$-adapted covering. Those will be stated blankly  in Subsection~\ref{ass} and we give first examples of manifolds that fulfill our requirements. A larger class of examples will be presented in Appendix~\ref{app}. 
The optimal atomic decomposition together with the geometric estimates then will be applied in Section~\ref{sec:4},  
where we present and prove our main results concerning decay properties and local smoothness of $G$-invariant functions. Finally, in Section~\ref{sec:5}, as an application, we obtain some Caffarelli-Kohn-Nirenberg inequalities.

\section{Preliminaries and notations}\label{sec:2}

\subsection{General notations}  Let $\nat$ be the collection of all natural numbers, and let $\nat_0 = \nat \cup \{0 \}$. Let $\rn$ 
be the $n$-dimensional Euclidean space, $n \in \nat$, $\mathbb{C}$ the complex plane, and let $B(x,r)$  denote the open ball in $\mR^n$, or later in the given manifold, with center $x$ and radius $r$. Moreover, index sets are always assumed to be countable, and we use the Einstein sum convention.

For a real number $a$, let $a_+\define \max(a,0)$, and let $[a]$ denote its
integer part. For $p\in (0,\infty]$, the number $p'$ is defined by
$1/p'\define (1-1/p)_+$ with the convention that $1/\infty=0$. All unimportant positive constants will be denoted by $c$, occasionally with
subscripts.  For two non-negative expressions ({{i.e.},
functions or functionals) ${\mathcal  A}$, ${\mathcal  B}$, the
symbol ${\mathcal A}\lesssim {\mathcal  B}$ (or ${\mathcal A}\gtrsim
{\mathcal  B}$) means that $ {\mathcal A}\leq c\, {\mathcal  B}$ (or
$c\,{\mathcal A}\geq {\mathcal B}$) for a suitable constant $c$. If ${\mathcal  A}\lesssim
{\mathcal  B}$ and ${\mathcal A}\gtrsim{\mathcal  B}$, we write
${\mathcal  A}\sim {\mathcal B}$ and say that ${\mathcal  A}$ and
${\mathcal  B}$ are equivalent. Given two (quasi-) Banach spaces $X$ and $Y$, we write $X\hookrightarrow Y$
if $X\subset Y$ and the natural embedding of $X$ into $Y$ is continuous.\\

\subsection{Manifolds of bounded geometry}

Let $M^n$ be an $n$-dimensional complete manifold with Riemannian metric $g$. 
We denote the volume element on $M$ with respect to the metric $g$ by $\vo_g$. 

A cover $(U_\alpha)_{\alpha \in I}$ of $M$ is a collection of open subsets of $U_\alpha\subset M$ where $\alpha$ runs over an index set $I$. 
The cover is called \emph{uniformly locally finite} if there exists a constant $L>0$ such that each $U_\alpha$ is intersected by at most $L$ sets $U_\beta$. 

A \emph{chart} on $U_{\alpha}$ is given by local coordinates, i.e., a diffeomorphism  $\kappa_\alpha\colon x=(x^1, \ldots, x^n) \in V_\alpha \subset \mathbb{R}^n \to \kappa_\alpha(x)\in U_\alpha$. 
A collection $\uA=(U_\alpha, \kappa_\alpha)_{\alpha\in I}$ is called an \emph{atlas of $M$}. 

Moreover, a collection of smooth functions $(h_\alpha)_{\alpha\in I}$ on $M$ with 
\[ \supp\ h_{\alpha}\subset U_{\alpha},\qquad  0\leq h_\alpha \leq 1,\qquad  \text{and} \qquad  \sum_\alpha h_\alpha=1 \quad \text{on }M  
 \]
 is called a \emph{partition of unity subordinated to the cover $(U_\alpha)_{\alpha\in I}$}.
 The triple $\uT\define (U_{\alpha}, \kappa_{\alpha},h_{\alpha})_{\alpha \in I}$ is called a \emph{trivialization} of the manifold $M$. 

\begin{defi}\label{defi_bdgeom}\cite[Def. 1.1]{Sh92}
A Riemannian manifold $(M^n,g)$ is of \emph{bounded geometry} if the following two conditions are satisfied:
\begin{itemize}
\item[(i)] The injectivity radius $r_{M}$  of $(M,g)$ is positive.
\item[(ii)] Every covariant derivative of the Riemann curvature tensor $R^M$ of $M$ is bounded, i.e., for all $k\in \mN_0$ there is a constant $C_k>0$ such that $|(\nabla^M)^kR^M|_g\leq C_k$.
\end{itemize}
\end{defi}

 \begin{ex}[{\textbf{Geodesic trivialization}}]\label{geod_coord_triv}
Let $(M,g)$ be a complete Riemannian manifold of bounded geometry. Fix $z\in M$. For $v\in T_z^{\leq r}M\define\{ w\in T_zM\ |\ g_z(w,w)\leq r^2\}$, we denote by $c_v\colon [-1,1] \to M$ the unique geodesic with $c_v(0)=z$ and $\dot{c}_v(0)=v$. Let $r>0$ be smaller than the injectivity radius of $M$. Then by definition the exponential map $\exp^M_z\colon T_z^{\leq r}M \to M$ is a diffeomorphism defined by
$\exp^M_z(v)\define c_{v}(1)$. Let $S=\{p_\alpha\}_{\alpha\in I}$ be a set of points in $M$ such that $(U^{\mathrm{geo}}_\alpha\define B_r(p_\alpha))_{\alpha\in I}$ covers $M$.  For each $p_\alpha$ we choose an orthonormal frame of $T_{p_\alpha}M$ and call the resulting identification $\lambda_\alpha\colon \mR^n\to T_{p_\alpha} M$. Then, $\uA^{\mathrm{geo}}= (U^{\mathrm{geo}}_\alpha, \kappa^{\mathrm{geo}}_\alpha=\exp_{p_\alpha}^M\circ \lambda_\alpha\colon V_\alpha^{\mathrm{geo}}\define B_r^n \to U_\alpha^{\mathrm{geo}})_{\alpha \in I}$ is an atlas of $M$ -- called geodesic atlas. { (Note that $\lambda_\alpha^{-1}$ equals the tangent map ${(d \kappa_\alpha^{\mathrm{geo}})}^{-1}$ at $p_\alpha$.)}\\
Furthermore, there exists a  geodesic atlas that is uniformly locally finite. 
Moreover, there is a partition of unity $h^{\mathrm{geo}}_\alpha$ subordinated to $(U^{\mathrm{geo}}_\alpha)_{\alpha\in I}$ such that for all $k\in \mN_0$ there is a constant $C_k>0$ such that $|\uD^\a (h^{\mathrm{geo}}_\alpha\circ \kappa^{\mathrm{geo}}_\alpha)|\leq C_k$ for all multi-indices $\a$ with $|\a|\leq k$, cf. \cite[Proposition~7.2.1]{Tri92} and the references therein.
The resulting trivialization is denoted by $\uT^{\mathrm{geo}}=(U^{\mathrm{geo}}_\alpha, \kappa^{\mathrm{geo}}_\alpha, h^{\mathrm{geo}}_\alpha)_{\alpha \in I}$ and referred to as \emph{geodesic trivialization}.
\end{ex}

In general, function spaces defined on $M$ defined via localization and pull-back onto $\rn$ do depend on the underlying trivialization $\uT$ used in the definition. The standard approach is to use the geodesic trivialization.  In order to gain some flexibility and work with Fermi coordinates we  investigated in \cite{GS13} under which conditions on $\uT$ the resulting  norms of our function spaces turn out to be equivalent. We introduced the following terminology. 

\begin{defi}\label{bddcoord} Let $(M^n,g)$ be a Riemannian manifold of bounded geometry. Moreover, let a uniformly locally finite trivialization $\uT=(U_\alpha, \kappa_\alpha,h_\alpha)_{\alpha\in I}$ be given. We say that $\uT$ is \emph{admissible} if the following conditions are fulfilled:
\begin{itemize}
 \item[(B1)] $\uA=(U_\alpha, \kappa_\alpha)_{\alpha\in I}$ is compatible with geodesic coordinates, i.e., for $\uA^{\mathrm{geo}}=(U^{\mathrm{geo}}_\beta, \kappa^{\mathrm{geo}}_\beta)_{\beta \in J}$ being a geodesic atlas of $M$ as in Example \ref{geod_coord_triv}, 
  there are constants $C_k>0$ for $k\in \mN_0$ such that for all $\alpha\in I$ and $\beta\in J$ with $U_{\alpha}\cap U^{\mathrm{geo}}_{\beta}\neq \varnothing$ and all $\a\in \mN_0^n$ with $|\a|\leq k$,  
  \[\quad \qquad  |\uD^\a (\mu_{\alpha\beta}\define (\kappa_\alpha)^{-1} \circ \kappa^{\mathrm{geo}}_\beta)|\leq C_k \text{\ \ and\ \ }  |\uD^\a (\mu_{\beta\alpha}\define (\kappa^{\mathrm{geo}}_\beta)^{-1} \circ \kappa_\alpha)|\leq C_k.\]
\item[(B2)]  For all $k\in \mathbb{N}$ there exist $c_k>0$ such that for all $\alpha\in I$ and all multi-indices $\a$ with $|\a|\leq  k$,  
\[ |D^{\a}(h_\alpha\circ\kappa_\alpha)|\leq c_k.\]
\end{itemize}
\end{defi}

\subsection{Function spaces  on manifolds with bounded geometry}

For $0<p<\infty$ the $L_p$-norm of a compactly supported smooth function $v\in \mathcal{D}(M)\define C_c^\infty(M)$ is given by $\Vert v\Vert_{L_p(M)}\define\left( \int_M |v|^p\vo_g\right)^\frac{1}{p}$. The set $L_p(M)$ is then the completion of $\mathcal{D}(M)$ with respect to the $L_p$-norm. The space of distributions on $M$ is denoted by $\mathcal{D}'(M)$.

Before we introduce Triebel-Lizorkin and Besov spaces on manifolds, we briefly recall their definition on $\rn$. 
By the Fourier-analytical approach, Triebel-Lizorkin spaces  $\F$, $s\in\real$, $0<p<\infty$,   $\ 0<q\leq\infty$, consist of 
all tempered distributions $f\in \mathcal{S}'(\rn)$ such that
\begin{equation}\label{F-rn}
\big\Vert f\big\Vert_{\F}=
\Big\Vert \Big(\sum_{j=0}^{\infty}\big|2^{js}(\varphi_j\widehat{f})^\vee(\cdot)\big|^q\Big)^{1/q} 
\Big\Vert_{L_p(\rn)}
\end{equation}
$($usual modification if $q=\infty)$ is finite. Here $\mathcal{S}'(\rn)$ denotes the space of all tempered distributions, that is the dual of the Schwartz space. Moreover,  $\ \{\varphi_j\}_{j=0}^\infty$
denotes a   \textit{smooth dyadic resolution of unity}, where  $\ \varphi_0=\varphi \in \mathcal{S}(\rn)\ $ with 
$\ 
\supp\ \varphi\subset\left\{y\in\rn : \ |y|<2\right\}\quad \mbox{and}\quad
\varphi(x)=1\quad\mbox{if}\quad |x|\leq 1$,
and for each $\ j\in\nat\;$  put $\ \varphi_j(x)=
\varphi(2^{-j}x)-\varphi(2^{-j+1}x)$.   
In general, Besov spaces on $\rn$ are defined in the same way by interchanging the order in which the $\ell_q$- and $L_p$-norms are taken in \eqref{F-rn}. Hence,  the Besov space 
$B^s_{p,q}(\rn)$, $s\in\real$, $0<p,q \leq \infty$ consists of 
all distributions $f\in \mathcal{S}'(\rn)$ such that
\begin{align*}
\big\Vert f\big\Vert_{B^s_{p,q}(\rn)}=
 \Big(\sum_{j=0}^{\infty}2^{jsq}\big\Vert(\varphi_j\widehat{f})^\vee\big\Vert_{L_p(\rn)}^q\Big)^{1/q} 
\end{align*}
$($usual modification if $p=\infty$ and/or $q=\infty)$ is finite. The scales coincide  if $p=q$, i.e., 
\[B^s_{p,p}(\rn)=F^s_{p,p}(\rn), \qquad 0<p<\infty,\]
and we extend this to $p=\infty$ by putting $F^s_{\infty,\infty}(\rn)\define B^s_{\infty,\infty}(\rn)$.  Moreover, in \cite[(2.6.5/1)]{Tri92} it is shown that 
\begin{equation}\label{zygmund-rn}
F^s_{\infty,\infty}(\rn)=B^s_{\infty,\infty}(\rn)=\mathcal{C}^s(\rn), \quad s>0, 
\end{equation}
where $\mathcal{C}^s(\rn)$, $s>0$, denote the H\"older-Zygmund spaces as defined in \cite[Sect.~1.2.2]{Tri92}.  
The scales $F^s_{p,q}(\rn)$ and  $B^s_{p,q}(\rn)$ were studied in detail in  \cite{Tri83,Tri92}, where the reader may also find further references to the literature. 

On $\mR^n$ one usually gives priority to Besov spaces, and they are mostly considered to be the simpler ones compared to Triebel-Lizorkin spaces. However,  the situation is different on manifolds, since Besov spaces lack the so-called \textit{ localization principle}, cf. \cite[Theorem~2.4.7(i)]{Tri92}. Thus, Besov spaces on a manifold $M$ are introduced via real interpolation of Triebel-Lizorkin spaces. We refer to \cite[Sect. 2.4.1]{Tri83} and \cite{Tri78} for the definition of these interpolation spaces $(.,.)_{\Theta,q}$ and details on the subject. 

\begin{defi}\label{F-koord}
Let $(M,g)$ be a Riemannian manifold of bounded geometry with an admissible trivialization $\uT=(U_\alpha, \kappa_{\alpha}, h_{\alpha})_{\alpha \in I}$, cf. Definition~\ref{bddcoord}, and  let $s\in \real$.

\begin{itemize}
\item[(i)] Let either  $0<p<\infty$, $0<q\leq \infty$ or $p=q=\infty$.  Then the space $F^{s}_{p,q}(M)$ contains all distributions $f\in \mathcal{D}'(M)$ such that 
\begin{align*}
\Vert f\Vert_{F^{s}_{p,q}(M)}\define \left(\sum_{\alpha\in I} \Vert (h_\alpha f)\circ \kappa_\alpha\Vert^p_{F^s_{p,q}(\mathbb{R}^n)}\right)^{\frac{1}{p}}
\end{align*}
is finite (with the usual modification if $p=\infty$). 
\item[(ii)] Let $0<p,q\leq \infty$,   $-\infty< s_0<s< s_1<\infty$, and $0<\Theta<1$. Then 
\[
B^{s}_{p,q}(M)\define\left(F^{s_0}_{p,p}(M), F^{s_1}_{p,p}(M)\right)_{\Theta,q}
\]
with $s=(1-\Theta)s_0+\Theta s_1$. 
\end{itemize}
\end{defi}

\begin{rem} According to their notation, Besov and Triebel-Lizorkin spaces will sometimes abbreviated by $B$- and $F$- spaces, respectively.  
Restricting ourselves to  geodesic trivializations $\uT^{\text{geo}}$, the spaces from Definition~\ref{F-koord} coincide with the spaces $F^s_{p,q}(M)$ and $B^s_{p,q}(M)$, introduced in \cite[Definition~7.2.2, 7.3.1]{Tri92}. In \cite[Thms. 14, 57]{GS13} it was shown that the spaces defined above are independent of the chosen trivializations $\uT$ as long as they are admissible. Furthermore, the space $B^{s}_{p,q}(M)$ is  independent of the chosen numbers $s_0, s_1\in \real$. 
For simplicity reasons we shall often write $A^s_{p,q}(M)$, where $A=B$ or $A=F$, refering to both spaces. \\
The function spaces on manifolds have a lot of properties analogous to the spaces $F^s_{p,q}(\rn)$ and $B^{s}_{p,q}(\rn)$, cf. \cite{Tri92}. In particular, the $F$-scale covers Lebesgue spaces $L_p(M)=F^0_{p,2}(M)$,  Sobolev spaces $W^k_p(M)=F^k_{p,2}(M)$, $1<p<\infty$, $k\in\nat$, defined in terms of covariant derivatives,  as well as Bessel potential spaces $H^s_p(M)=F^s_{p,2}(M)$ defined via the Laplace-Beltrami operator $\Delta$ of $M$. Moreover, in \cite[Thm.~7.3.1(ii)]{Tri92} it is shown that $F^s_{\infty,\infty}(M)=B^s_{\infty,\infty}(M)$ for $s\in \real$ and in good agreement  with \eqref{zygmund-rn} we define H\"older-Zygmund spaces on $M$  via
\begin{equation}\label{def-zygmund-M}
\mathcal{C}^s(M)\define F^s_{\infty,\infty}(M), \qquad s>0.
\end{equation}
In \cite[Thm.~7.5.3(ii)]{Tri92} it is shown that for $s>4$ the  spaces  $\mathcal{C}^s(M)$ also admit a characterization in terms of differences of functions. 
\end{rem}

In this paper we mainly use atomic decompositions of the spaces $A^s_{p,q}(M)$. We recall the construction from  \cite{Skr98}. Let $r<r_{\text{inj}}/3$ and 
\begin{equation*}
\Omega_j=\left\{B(x_{j,i},2^{-j}r)\right\}_{i=0}^{\infty}, \qquad j=0,\ldots, \infty,
\end{equation*}
be a sequence of uniformly locally finite coverings of $M$ by geodesic balls. The supremum of multiplicities of the coverings $\Omega_j$, $j\in \nat_0$, is called the \emph{multiplicity of the sequence $\{\Omega_j\}_j$}. Furthermore, the sequence is called \emph{uniformly finite} if its multiplicity is finite and  $B(x_{j,i},2^{-j-1}r)\cap B(x_{j,k},2^{-j-1}r)=\emptyset$ for any possible $j,i,k$ with $i\neq k$.    
In \cite{Skr98} it was shown that there exists some $r_0>0$ such that for all $l\in \nat$ and $l\cdot r<r_0$ the multiplicity of the sequence $\left\{\Omega_j^{(l)}\right\}_{j=0}^{\infty}$ with $\Omega_j^{(l)}=\{B(x_{i,j},l2^{-j}r)\}_{i=0}^{\infty}$ is also finite.  

Now we recall the definitions of the atoms we use as building blocks. 

\begin{defi}
Let $s\in \real$ and $0<p\leq \infty$. Let $L$ and $K$ be integers such that $L\geq 0$ and $K\geq -1$. Let $r>0$ and $C\geq 1$ be positive constants. 
\begin{enumerate}[(i)]
\item A smooth function $a(x)$ is called an \emph{$1_L$-atom centered in $B(x,r)$} if 
\begin{align}\label{atom-1}
\supp \ a& \subset  B(x,2r),\\
\sup_{y\in M}|\nabla^k a(y)|&\leq  C\qquad\qquad \text{for any }k\leq L.
\end{align}
\item A smooth function $a(x)$ is called an \emph{$(s,p)_{L,K}$-atom centered in $B(x,r)$} if 
\begin{align}
\supp \ a&\subset B(x,2r),\\
\label{atom-4}
 \sup_{y\in M}|\nabla^k a(y)|&\leq Cr^{s-k-\frac np}, \quad \text{for any } k\leq L,\\
\label{atom-5}
 \left|\int_M a(y)\psi(y) \mathrm{d}y\right|&\leq Cr^{s+K+1+\frac{n}{p'}}\left\|\psi|C^{K+1}(B(x,2r))\right\|
\end{align}
holds for any $\psi\in C_0^{\infty}(B(x,3r))$ where $\frac{1}{p}+\frac{1}{p'}=1$. If $K=-1$, then \eqref{atom-5} follows from \eqref{atom-4}, i.e.,  no moment conditions are required. 
\item A family $\mathcal{A}_{s,p}^{L,K}$ of $1_L$-atoms and $(s,p)_{L,K}$-atoms is called a \emph{building family of atoms corresponding to the uniformly finite sequence of coverings $\left\{\Omega_j\right\}_{j=0}^{\infty}$ of $M$}, if all atoms belonging to the family are centered at the balls of the coverings $\Omega_j$, satisfy the conditions \eqref{atom-1}--\eqref{atom-5}, and if the family contains all atoms satisfying \eqref{atom-1}--\eqref{atom-5}. 
\end{enumerate}
\end{defi}

Define 
\begin{equation*}\label{sigma_p}
\sigma_p^{(n)}\define n\left(\frac 1p-1\right)_+\qquad \text{and}\qquad \sigma_{pq}^{(n)}\define n\left(\frac{1}{\min(p,q)}-1\right)_+.
\end{equation*}
Note that for $p,q\geq 1$ we always have $\sigma_p^{(n)}=\sigma_{pq}^{(n)}=0$. The atomic decomposition established in \cite{Skr98} then reads as follows. 

\begin{thm}[\textbf{Atomic decomposition}, {\cite[Thm 2]{Skr98}}]\label{thm:atomic-dec}
Let $s\in \real$ and $0<p,q\leq \infty$ ($0<p<\infty$ or $p=q=\infty$ in case of the $F$-spaces). Let $L$ and $K$ be fixed integers satisfying the conditions  
\[
L\geq \left([s]+1\right)_+ \quad \text{ and } \quad K\geq \left\{ \begin{matrix} \max\left([\sigma_{pq}^{(n)}-s],-1\right)\
                                                  &\text{for $F$-spaces},\\                                                 
\max\left([\sigma_{p}^{(n)}-s],-1\right) &\text{for $B$-spaces.}\end{matrix} \right. 
\]
There exists a positive constant $\varepsilon_0$, $0<\varepsilon_0\leq r_0$, such that there is a uniformly sequence of coverings $\{\Omega_j\}_{j=0}^{\infty}$ with $\Omega_j=\{B(x_{i,j},2^{-j}r)\}_{i=0}^{\infty}$, $r<\varepsilon_0$, and a building family of atoms corresponding to the sequence $\mathcal{A}_{s,p}^{L,K}$ with the following properties:
\begin{itemize}
\item[(i)] Each $f\in F^s_{p,q}(M)$ ($f\in B^{s}_{p,q}(M)$) can be decomposed as 
\begin{equation}\label{atom-dec}
f=\sum_{j=0}^{\infty}\sum_{i=0}^{\infty}s_{j,i}a_{j,i} 
\end{equation}
(convergence in $\mathcal{D}'(M)$), where the coefficients satisfy 
\begin{align}\nonumber
\left\|\left(\sum_{j,i=0}^{\infty}\left(s_{j,i}2^{\frac{jn}{p}}\chi_{j,i}(\cdot)\right)^q\right)^{1/q}\Bigg|L_p(M)\right\|<\infty\quad\\ 
\qquad\left(\left(\sum_{j=0}^{\infty}\left(\sum_{i=0}^{\infty}|s_{j,i}|^p\right)^{q/p}\right)^{1/q}<\infty\; \quad \text{for  $B$-spaces} \label{atom-dec2}\right) \end{align}
(with the usual modifications if $p,q=\infty$) and $\chi_{j,i}$ denotes the characteristic function of the ball $B(x_{j,i},2^{-j})$.
\item[(ii)] Conversely, suppose that $f\in \mathcal{D}'(M)$ can be represented as in \eqref{atom-dec} and \eqref{atom-dec2}. Then $f\in F^s_{p,q}(M)$ ($f\in B^s_{p,q}(M)$). Furthermore, the infimum of \eqref{atom-dec2} with respect to all admissible representations (for a fixed sequence of coverings and fixed integers $L$, $K$) is an equivalent norm in $F^s_{p,q}(M)$ ($B^s_{p,q}(M)$).  
\end{itemize}
\end{thm}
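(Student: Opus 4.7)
The plan is to reduce the problem to the known atomic decomposition of $F^s_{p,q}(\rn)$ and $B^s_{p,q}(\rn)$ (Frazier--Jawerth, as recorded in \cite{Tri92}) via the admissible trivialization $\uT=(U_\alpha,\kappa_\alpha,h_\alpha)_{\alpha\in I}$ used to define $A^s_{p,q}(M)$, and then to verify that the push-forward and pull-back of Euclidean atoms satisfy the conditions \eqref{atom-1}--\eqref{atom-5}. Bounded geometry of $(M,g)$, together with (B1) and (B2) of Definition~\ref{bddcoord}, will furnish the uniform-in-$\alpha$ derivative bounds needed to transport atoms across charts without loss.

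For direction (i), given $f\in F^s_{p,q}(M)$, set $f_\alpha\define(h_\alpha f)\circ\kappa_\alpha\in F^s_{p,q}(\rn)$; summing $\|f_\alpha\|^p_{F^s_{p,q}(\rn)}$ recovers $\|f\|_{F^s_{p,q}(M)}^p$ by definition. I would apply the Euclidean atomic decomposition to each $f_\alpha$ to obtain atoms $a_{\alpha,j,k}^{\mathrm{Euc}}$ supported in dyadic cubes of side $\sim 2^{-j}$, with smoothness order $L$ and moment order $K$, and coefficients $s^\alpha_{j,k}$ whose sequence-space norm is equivalent to $\|f_\alpha\|_{F^s_{p,q}(\rn)}$. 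Choosing $\varepsilon_0\leq r_0$ and a uniformly finite sequence $\{\Omega_j\}$ of coverings by geodesic balls of radius $2^{-j}r$, $r<\varepsilon_0$, the distortion bounds in (B1) let me match each Euclidean cube at scale $2^{-j}$ inside $V_\alpha$ with a ball $B(x_{j,i},2^{-j}r)\in\Omega_j$, and transplant $a_{j,i}\define a^{\mathrm{Euc}}_{\alpha,j,k}\circ\kappa_\alpha^{-1}$, suitably relabelling across the different $\alpha$'s. Because the atlas is uniformly locally finite, only a bounded number of $\alpha$-contributions meet each ball, so the reindexed coefficients still lie in the required sequence space with norm $\lesssim\|f\|_{F^s_{p,q}(M)}$; and because (B1) bounds all derivatives of the transition maps $\mu_{\alpha\beta}$, the chain rule and the uniform bounds on $\sqrt{\det g}$ in each chart turn the transplanted functions into genuine $1_L$- or $(s,p)_{L,K}$-atoms at the stated thresholds for $L$ and $K$.

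For direction (ii), I would reverse the procedure: for each $\alpha$ with $U_\alpha$ meeting $\supp a_{j,i}$, examine $(h_\alpha a_{j,i})\circ\kappa_\alpha$ and verify, again using (B1), (B2) and the uniform boundedness of $\sqrt{\det g}$ and its derivatives, that it is a constant multiple of an Euclidean atom for $A^s_{p,q}(\rn)$ associated to a cube of side $\sim 2^{-j}r$. The Euclidean synthesis theorem then controls $\|(h_\alpha f)\circ\kappa_\alpha\|_{F^s_{p,q}(\rn)}$ by the local sequence-space norm; summing over $\alpha$ with the uniform-local-finiteness control yields $f\in F^s_{p,q}(M)$ with the claimed norm bound. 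Convergence in $\mathcal{D}'(M)$ follows from the moment conditions \eqref{atom-5} and the dyadic $j$-decay. The Besov assertion follows analogously (with $\ell_q(L_p)$ replacing $L_p(\ell_q)$), or by real interpolation from two $F$-space decompositions, consistent with Definition~\ref{F-koord}(ii).

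The main technical obstacle lies in the scale-by-scale bookkeeping: one must simultaneously index the atoms on $M$ by a \emph{single} sequence of coverings $\{\Omega_j\}$, match each chart's Euclidean dyadic lattice to this global indexing, and preserve the norm equivalence during the reindexing. This would degrade precisely if the geometric distortion in (B1) were not uniform in $\alpha$ or if the enlargements $\Omega^{(l)}_j$ were not uniformly finite for small $l$; both failures are excluded by bounded geometry together with the constraint $r<\varepsilon_0\leq r_0$ identified in \cite{Skr98}. A secondary subtlety is that the thresholds on $L$ and $K$ coincide exactly with the Euclidean ones; this is only possible because (B1) provides uniform control on \emph{all} higher derivatives of the transition maps, so the geometric distortion is absorbed entirely into the universal constants and does not force an artificial increase of the moment order.
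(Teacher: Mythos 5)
First, note that the paper does not prove this theorem at all: it is imported verbatim from \cite[Thm 2]{Skr98}, so there is no in-paper proof to compare against, only the argument in that reference. Your strategy for the $F$-scale --- localize via an admissible trivialization, apply the Euclidean (Frazier--Jawerth/Triebel) atomic decomposition to each $(h_\alpha f)\circ\kappa_\alpha$, and transplant atoms back using the uniform derivative bounds from (B1), (B2) and bounded geometry --- is essentially the transference argument of \cite{Skr98}, and the points you flag (uniform local finiteness of the reindexing, stability of the weak moment condition \eqref{atom-5} under multiplication by the cutoffs $h_\alpha$, matching the Euclidean thresholds for $L$ and $K$) are the right ones. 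For the $F$-spaces I would accept this as a correct, if compressed, outline.

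The genuine gap is in the Besov case. Your primary route, ``the Besov assertion follows analogously with $\ell_q(L_p)$ replacing $L_p(\ell_q)$,'' presupposes that $\Vert f\Vert_{B^s_{p,q}(M)}$ is equivalent to an $\ell_q(\ell_p)$-type combination of the chart-local norms $\Vert (h_\alpha f)\circ\kappa_\alpha\Vert_{B^s_{p,q}(\rn)}$. This is exactly the localization principle, which the paper explicitly states \emph{fails} for Besov spaces on manifolds --- that failure is the very reason $B^s_{p,q}(M)$ is defined by real interpolation of $F$-spaces in Definition~\ref{F-koord}(ii) rather than by localization. So this branch of the argument cannot work as written. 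Your fallback, ``by real interpolation from two $F$-space decompositions,'' is the correct idea in spirit (and is how \cite{Skr98} proceeds), but it is only named, not executed, and it hides a real difficulty: to interpolate an atomic decomposition one needs a \emph{linear} retraction--coretraction pair between $F^{s_i}_{p,p}(M)$ and the corresponding sequence spaces, whereas the assignment $f\mapsto(s_{j,i})$ produced by taking a near-optimal decomposition of each $f_\alpha$ is not canonically linear. One must either construct the coefficients by a linear procedure (local means / heat-kernel pairings, as in \cite{Skr98,Skr03}) or otherwise justify that the interpolated sequence spaces $(\,f^{s_0}_{p,p},f^{s_1}_{p,p}\,)_{\Theta,q}=b^s_{p,q}$ identify the claimed $\ell_q(\ell_p)$ coefficient condition. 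Without this, part (i) for $B$-spaces and the norm equivalence in part (ii) are not established.
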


\section{Symmetries on manifolds}\label{sec:prep}

In this part we deal with symmetries on manifolds and  collect all the preliminaries necessary to understand the main results. First we introduce Besov and Triebel-Lizorkin spaces which contain G-invariant functions only  and then provide optimal atomic decompositions for them. These kind of decompositions have the advantage, that in contrast to the atomic decompositions  in Theorem \ref{thm:atomic-dec} the expression \eqref{atom-dec2}  yields an equivalent norm, i.e., the coefficients are optimal and  there is no need to take the infimum over all possible decompositions.  
Finally,  in Subsection~\ref{ass} we state an assumption on the metric in relation with  the $G$-action that is needed for our proofs later on, cf. Assumption \ref{ass:G},  and give  some first examples of manifolds satisfying this assumption. A larger class of manifolds fulfilling our requirements will be defined in Appendix~\ref{app}.

Let $(M,g)$ be a connected Riemannian manifold. Its group of isometries, denoted by $\mathrm{Isom}(M,g)$, is a Lie group with respect to the compact open topology, cf. \cite{MS39}. Let $G$ be a compact subgroup of the group of isometries $\mathrm{Isom}(M,g)$. Then $G$ is a Lie subgroup of $\mathrm{Isom}(M,g)$. From now on we study Riemannian manifolds of bounded geometry together with such a $G$-action. In particular,  the action of $G$ on $M$ is denoted by the map $(g,x)\in G\times M\mapsto g\cdot x\in M$. The \emph{orbit} of a point $x\in M$ under the $G$-action is the set $ G\cdot x\define  \{ g\cdot x\, |\, g\in G\}$ and the \emph{stabilizer} of $x$ is $S_g(x)\define \{ g\in G\ |\ g\cdot x=x\}$. For any $x\in M$ the orbit $G\cdot x$ is a smooth compact submanifold of $M$ and $G/S_G(x) \to G\cdot x$ is a diffeomorphism, \cite[Prop. 21.7]{Lee13}. 
The \emph{orbit space} $M/G$ is the set of all orbits equipped with the quotient topology. 
In general this is not a manifold but a stratified space. However,  there is always a dense subset $\tilde{M}\subset M$, the \emph{principal stratum}, such that $\tilde{M}/G$ is a manifold and $M\setminus \tilde{M}$ has measure zero. We define 
\begin{equation}\label{tilde_n}
\tilde{n}\define\dim(\tilde{M}/G)
\end{equation}
and put $\dim(M/G)=\dim(\tilde{M}/G)$, hence, $\tilde{n}$ denotes the dimension of the orbit space.   We give an easy example to illustrate the main ideas. More information can be found in Appendix \ref{app}.

\begin{ex}
Let $M=\real$ and $G=\mathbb{Z}_2$ be the discrete group reflecting every $x\in M$ at the origin. Then for the orbits we see that  $G\cdot x=\pm x$ if $x\neq 0$, $G\cdot 0=0$ (i.e., $0$ is the fix point of the group action), and for the orbit space we obtain  $M/G=[0,\infty)$. Moreover,  $\tilde{M}=\real\setminus \{0\}$ is the principal stratum and  $\tilde{M}/G=(0,\infty)$ with $\tilde{n}=\dim(\tilde{M}/G)=1$.  
\end{ex}

\subsection{{Function spaces with symmetry}}

We wish to investigate $G$-invariant functions and their properties in the sequel. For $\varphi\in \mathcal{D}(M)= C^{\infty}_c(M)$ and $g\in G$ we put $\varphi^g(x)\define \varphi(g\cdot x)$. A distribution $f\in \mathcal{D}'(M)$ is called \emph{$G$-invariant} if for any $\varphi\in C^{\infty}_c(M)$ and any $g\in G$, 
\[f(\varphi^g)=f(\varphi),\]
i.e., the function is constant along the orbits of the action. For any possible $s,p,q$ we put 
\[
R_GA^s_{p,q}(M)\define \{f\in A^s_{p,q}(M): \ f \text{ is $G$-invariant}\},  
\]
where $A\in \{B,F\}$. These spaces are closed subspaces of the spaces $B^s_{p,q}(M)$, cf. \cite[Sect. 3.1, Rem. 4]{SiSk00}.\\

\subsection{Optimal atomic decompositions adapted to the group action}\label{ssec:op}
When dealing with $G$-invariant functions we will work with (optimal) atomic decompositions (based on coverings) adapted to the action of the group $G$, as introduced by Skrzypczak in \cite{Skr03}. In this case the atomic decompositions according to Theorem~\ref{thm:atomic-dec} have some nice additional properties that will be useful later on.  

Let $A$ be a nonempty subset of a Riemannian manifold $M$ of dimension $n$. Let $\varepsilon >0$ and $\alpha\in \mN$. A subset $\mathcal{H}$ of $A$ is called an \emph{$(\varepsilon,\alpha)$-discretization of $A$} if the distance between two points in $\mathcal{H}$ is greater than or equal to $\varepsilon$ and 
\[
A\subset \bigcup_{x\in \mathcal{H}}B(x,\alpha \varepsilon). 
\]
Let us remark that for any such $A\subset M$ and for any $\epsilon>0$ there is an $(\varepsilon,1)$-discretization of $A$ which is seen immediately by chosing a maximal set of points two of them have distance greater or equal to $\epsilon$. Furthermore, if $(M,g)$ has bounded geometry and $\mathcal{H}$ is an $(\varepsilon,\alpha)$-discretization of $M$ and $m\geq \alpha$, then the family $\{B(x,m\varepsilon)\}_{x\in \mathcal{H}}$ is a uniformly locally finite covering of $M$ with multiplicity that can be estimated from above by a constant depending on $n$ and $m$, but independent of $\varepsilon$,  cf. \cite[Lem. 3]{Skr03}.
By \cite[p. 764]{Skr03} we can contruct a sequence of discretizations of $M$ adapted to $G$ in the following way:

 \begin{flushleft}
\begin{minipage}{0.47\textwidth}
 For any $j\in \mN$ there is a $(2^{-j},1)$-discretization $\{ G\cdot x_{j,k}\}_{k\in \mathcal{H}_j}$ of the principal stratum $\tilde{M}/G$. Here $\mathcal{H}_j$ is a finite set or $\mathcal{H}_j=\mN$, depending on whether $M$ is compact or not. Let now $\{x_{j,k,\ell}\}_\ell$, $\ell=1,\ldots, \mathcal{H}_{j,k}$, be a $(2^{-j},1)$-discretization of  the orbit $G\cdot x_{j,k}\subset M$ where the distances are still measured on $M$ (not with the induced metric). Then, for each $j$ the set $\{x_{j,k,l}\}_{k,l}$ is a $(2^{-j},2)$ discretization of $M$ which we call a $(2^{-j},2)$ \emph{discretization adapted to the action of the group $G$}. 
\end{minipage}\hfill \begin{minipage}{0.5\textwidth}
\captionsetup{type=figure}
\includegraphics[width=7.6cm]{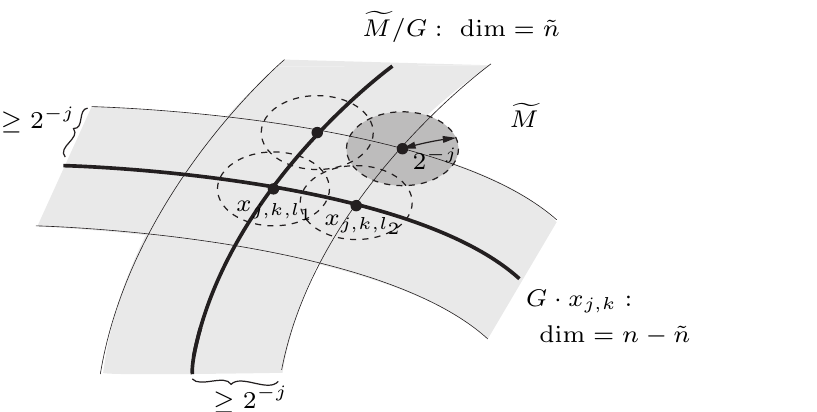}
\captionof{figure}{Locally finite covering of $M$ adapted to $G$}
\label{fig:covering}
\end{minipage}
\end{flushleft}

Furthermore, from our construction it follows  that the balls $\left\{B(x_{j,k,l},2^{-j-1})\right\}_{k,l}$ are disjoint and for each $j$ the sets  in  $\Omega_j\define \left\{B(x_{j,k,l},2^{-j+1})\right\}_{k,l}$ cover $M$.
Since $(M,g)$ has bounded geometry, the latter covers are uniformly locally finite with multiplicity bounded independent of $j$.

We use  the above covering adapted to the $G$-action together with the  heat atoms  constructed in \cite[Thm.~2]{Skr03}, which admit an optimal atomic decomposition. We briefly sketch the main ideas. Let 
\[
H_t=e^{t\Delta}, \quad t\geq 0,
\]
denote the heat semi-group, where $\Delta$ stands for the Laplace operator associated with the Riemannian metric $g$ on $M$. Let $\left\{\psi_{j,i}\right\}_{i=0}^{\infty}$ be a partition of unity corresponding to a $G$-adapted covering $\Omega_j=\{B(x_{j,i}, 2^{-j+1})\}_i$ where for simplicity we abbreviate $i=(k,\ell)$ (thus, in particular $\sum_i=\sum_{k=0}^{\mathcal{H}_j}\sum_{\ell=0}^{\mathcal{H}_{j,k}}$). 
Now we define \emph{$(s,p,r,m)$-heat atoms} of $f$ by
\[
a_{j,i}(x)\define s_{j,i}^{-1}(I-\Delta)^m\left(
\psi_{j,i}(x)\frac{1}{(r-1)!}\int_{\varepsilon b4^{-j}}^{\varepsilon b 4^{-j+1}}t^r(-\Delta)^rH_t((I-\Delta)^{-m}f)(x)\frac{\mathrm{d}t}{t}
\right)
\]
with 
\[
s_{j,i}\define 2^{j(s+2m-\frac np-2r)}\sum_{u\in I_{i}}\sup_{(t,x)\in Q_{j,u}}\left|(-\Delta)^rH_t((I-\Delta)^{-m}f)\right|(x)\in \mR
\]
where $Q_{j,i}=(b4^{-j-1},b4^{-j})\times B(x_{j,i},2^{-j})$ and $I_i=\{u\in \mN\ |\ B(x_{j,u}, 2^{-j})\cap B(x_{j,i}, 2^{-j})\neq \varnothing\}$. Here $b>1$ is a constant  and  $\varepsilon>0$ is chosen such that $\varepsilon b>1$. In particular, the constant $b$ appears in \cite[formulas (17), (18)]{Skr03} in  order to establish some estimates concerning the derivatives with respect to the time variable of $H_t f$, which in turn are immaterial for the proof of Theorem \ref{thm:optimal-dec} below.  Moreover, we define \emph{$(1,m,r)$-atoms} by 
\[
a_{0,i}(x)=s_{0,i}^{-1}(I-\Delta)^m\left(
\psi_{0,i}(x)\sum_{u=0}^{r-1}\frac{(\varepsilon b)^u}{u!}(-\Delta)^uH_{\varepsilon b}((I-\Delta)^{-m}f)\right)(x)
\] 
with 
\[
s_{0,i}=\sup_{0\leq u\leq r-1}\int_{Q_{0,i}}\left|\frac{\mathrm{d}^u}{\mathrm{d}t^u}H_s((I-\Delta)^{-m}f)(x)\right|\frac{\mathrm{d}s}{s}\mathrm{d}x.
\]

\begin{rem}\label{heat-atom-rem}
The term heat atom is justified by the fact that every $(s,p,r,m)$-heat atom is an $(s,p)_{r-2m,2m-1}$-atom, cf. \cite[p.~754]{Skr03} and the explanations given there. Note that in contrast to the general atomic decomposition in Theorem~\ref{thm:atomic-dec} the heat atoms themselves now  depend on the function one wants to decompose. But as we will see in Theorem~\ref{thm:optimal-dec} the norm is still computed using only the coefficients in this decomposition.
\end{rem}

With these preparations the atomic decomposition in Theorem~\ref{thm:atomic-dec} can now be rewritten according to our needs. The results are taken from \cite[Thm.~2]{Skr03}.

\begin{thm}[\textbf{Optimal atomic decomposition}]\label{thm:optimal-dec}
Let $s\in \real$, $1\leq p,q \leq \infty$ ($p<\infty$ or $p=q=\infty$ for $F$-spaces). Let $m$ and $r$ be nonnegative integers such that 
\begin{equation}\label{new-cond-param}
2m\geq \max([-s]+1,0)\qquad \text{and}\qquad r\geq \max([s+2m]+1,2m). 
\end{equation}
The distribution $f$ belongs to $R_GF^s_{p,q}(M)$ ($R_GB^s_{p,q}(M)$) if and only if there is an $(s,p,r,m)$-decomposition into the sum of heat atoms 
\[
f=\sum_{j=0}^{\infty}\sum_{k=0}^{\mathcal{H}_j}\sum_{l=1}^{\mathcal{H}_{j,k}}s_{j,k,l}a_{j,k,l}
\]
with $s_{j,k,l_1}=s_{j,k,l_2}=: s_{j,k}$ for $l_1,l_2\in \{0,\ldots, \mathcal{H}_{j,k}\}$ and 
\begin{equation*}
\|f|F^s_{p,q}(M)\|\sim \left\|\left(\sum_{j=0}^{\infty}2^{j\frac{nq}{p}}\left(\sum_{k=0}^{\mathcal{H}_j}\sum_{l=1}^{\mathcal{H}_{j,k}}|s_{j,k}|\chi_{j,k,l}(\cdot)\right)^q\right)^{1/q}\Bigg|L_p(M)\right\|<\infty 
\end{equation*}
for $F$-spaces, where $\chi_{j,k,l}$ denotes the characteristic function of $B(x_{j,k,l},2^{-j})$ and 
\begin{equation*}
\|f|B^s_{p,q}(M)\|\sim \left(\sum_{j=0}^{\infty}\left(\sum_{k=0}^{\mathcal{H}_j}\mathcal{H}_{j,k}|s_{j,k}|^p\right)^{q/p}\right)^{1/q}<\infty
\end{equation*}
for $B$-spaces (with the usual modifications if $p,q=\infty$). 
\end{thm}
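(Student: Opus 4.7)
The plan is to deduce the claim from the general (non-symmetric) heat atom decomposition of Skrzypczak \cite[Thm.~2]{Skr03} applied with the $G$-adapted covering $\{\Omega_j\}$ constructed just above the theorem, and then to exploit the commutativity of the $G$-action with the heat semigroup in order to show that the coefficients can be taken constant along orbits. The parameter restrictions \eqref{new-cond-param} are exactly those required by Skrzypczak's construction to ensure that the functions $a_{j,i}$ are genuine $(s,p)_{r-2m,2m-1}$-atoms (cf.\ Remark~\ref{heat-atom-rem}), so both the decomposition \eqref{atom-dec} and the two-sided norm equivalences with the coefficients $s_{j,k,\ell}$ (not yet orbit-constant) come for free from \cite[Thm.~2]{Skr03} combined with Theorem~\ref{thm:atomic-dec}. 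What has to be added is the statement $s_{j,k,\ell_1} = s_{j,k,\ell_2} =: s_{j,k}$ and the resulting simplification of the norm.

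The key observation is that every $g\in G$ is an isometry, so $g$ commutes with $\Delta$ and hence with $H_t = e^{t\Delta}$ and with $(I-\Delta)^{-m}$. Consequently, when $f$ is $G$-invariant so is the integrand
\[
F_t(x)\define (-\Delta)^r H_t\bigl((I-\Delta)^{-m}f\bigr)(x),
\]
jointly in $(t,x)$. If $x_{j,k,\ell_1}$ and $x_{j,k,\ell_2}$ lie on the same orbit $G\cdot x_{j,k}$, the isometry $g\in G$ with $g\cdot x_{j,k,\ell_1}=x_{j,k,\ell_2}$ maps $Q_{j,k,\ell_1}$ onto $Q_{j,k,\ell_2}$ and preserves $|F_t|$, so the two suprema defining the contributions to $s_{j,k,\ell_1}$ and $s_{j,k,\ell_2}$ agree. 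After choosing the partition of unity $\{\psi_{j,k,\ell}\}$ equivariantly along each orbit (by averaging over $G$) and replacing, if necessary, the sum over the neighbor index set $I_{(k,\ell)}$ by its orbital maximum, the whole coefficient depends only on $(j,k)$; this modification inflates the coefficients by at most a constant depending on the multiplicity of $\{\Omega_j\}$, which is uniformly bounded by \cite[Lem.~3]{Skr03}, hence the $\lesssim$-direction of the norm equivalence is preserved.

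With the coefficients now satisfying $s_{j,k,\ell}\equiv s_{j,k}$, the $F$-space estimate follows immediately from the general one, since the inner sum collects the characteristic functions of all balls $B(x_{j,k,\ell},2^{-j})$ along the orbit with a common coefficient $|s_{j,k}|$. For the $B$-space we simply compute
\[
\sum_{\ell=1}^{\mathcal{H}_{j,k}} |s_{j,k,\ell}|^p = \mathcal{H}_{j,k}\,|s_{j,k}|^p,
\]
which yields the stated formula. The converse direction (that any such decomposition represents an element of $R_G A^s_{p,q}(M)$) uses Theorem~\ref{thm:atomic-dec} for the norm bound and $G$-invariance of each orbital sum $\sum_\ell s_{j,k}a_{j,k,\ell}$, which is automatic from the equivariant choice of the partition of unity and the $G$-invariance of $F_t$. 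The main obstacle is the bookkeeping in the symmetrization step: the $G$-adapted covering constructed in the text is orbit-wise a discretization, but the neighbor sets $I_{(k,\ell)}$ entering the original definition of $s_{j,k,\ell}$ need not transform equivariantly. Either passing to a strictly $G$-equivariant covering or working with orbital maxima fixes this, and both options leave all the equivalences intact up to constants depending only on the geometry of $M$ and the uniform multiplicity bound.
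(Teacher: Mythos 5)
The paper does not actually prove this theorem: it is quoted from \cite[Thm.~2]{Skr03}, with only the parameter conditions \eqref{new-cond-param} adjusted (as explained in the remark that follows it), so there is no in-paper argument to compare against. Your sketch correctly reconstructs the mechanism of Skrzypczak's proof: the heat atoms are built from $f$ itself, every $g\in G$ is an isometry and hence commutes with $\Delta$, $H_t$ and $(I-\Delta)^{-m}$, so $|(-\Delta)^rH_t((I-\Delta)^{-m}f)|$ is $G$-invariant and its suprema over $G$-translates of the boxes $Q_{j,k,\ell}$ coincide, which is exactly what forces the coefficients to be constant along orbits; the simplified $B$- and $F$-norm formulas then follow by the bookkeeping you describe. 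Two small points. First, averaging the partition of unity over $G$ would smear each $\psi_{j,k,\ell}$ over the whole orbit and destroy the support condition of the atoms; but no equivariance of $\{\psi_{j,k,\ell}\}$ is needed, because the coefficients $s_{j,k,\ell}$ do not involve the partition of unity at all --- only the atoms do, and for those any uniformly bounded subordinate partition of unity suffices. Second, when you replace the sum over the neighbour set $I_{(k,\ell)}$ by an orbit-invariant quantity (e.g. the supremum over the enlarged ball $B(x_{j,k,\ell},3\cdot 2^{-j})$), you need two-sided comparability, not just the upper bound you mention, to preserve the equivalence $\sim$; this does hold, with the bounded multiplicative discrepancy absorbed into a renormalization of the atoms, using the uniform multiplicity bound from \cite[Lem.~3]{Skr03}.
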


\begin{rem}  Compared to \cite{Skr03} we changed the conditions on the parameters $m$ and $k$ in \eqref{new-cond-param}. In \cite[Thm.~2]{Skr03} it was assumed that 
\begin{equation}\label{cond-param-old}
m\geq \max([-s]+1,0)\qquad \text{and}\qquad k\geq [s+2m]+1. 
\end{equation}
By Remark \ref{heat-atom-rem} every $(s,p,k,m)$-heat atom is an $(s,p)_{k-2m,2m-1}$-atom, i.e., we have the relation  $L=k-2m$ and $K=2m-1$.  However, for $s=-2$ from \eqref{cond-param-old} we get $m\geq 3$, thus, $s+2m\geq 0$. This leads to the following condition on the parameter $L$, 
\[
L=k-2m\geq [s+2m]+1-2m=[s]+1=-1,
\]
which is a contradiction to Theorem \ref{thm:atomic-dec} where $L\geq \max([s]+1,0)$ is required. We remark that already for atomic decompositions in the euclidean case  the imposed moment conditions, i.e., the restrictions on $L$, are immaterial and refer to \cite{Sch09, SV09} in this context.  Our new conditions \eqref{new-cond-param} result from the calculations 
\[
K=2m-1\geq \max([-s],-1)\quad \iff \quad 2m\geq \max([-s]+1,0)
\]
and 
\[
L=k-2m\geq \max([s]+1,0)\quad \iff \quad k\geq \max([s+2m]+1, 2m), 
\]
guaranteeing that the conditions imposed on $L$ and $K$ in Theorem \ref{thm:atomic-dec} are satisfied. 
\end{rem}

We will later apply Theorem~\ref{thm:optimal-dec} to obtain both decay properties of $G$-invariant functions near infinity and near the boundary of the main stratum and Hölder regularity inside the main stratum, cf. Theorems \ref{thm-strauss} and  \ref{hoelder-reg}, respectively.

\subsection{Assumption on the Riemannian metric related to the \texorpdfstring{$G$}{G}-action.}\label{ass}

In this part we collect the  assumptions we shall need in order to prove our results in Sections~\ref{sec:4} and~\ref{sec:5}. Additionally, we  provide  some easy examples satisfying our requirements. More on this can be found in Appendix~\ref{app}.

The assumption we need will be on the $G$-adapted covering presented in  Section~\ref{ssec:op}. Some preparations are needed beforehand. 

\begin{minipage}{0.6\textwidth}
We cover the orbit $G\cdot x_{j,k}$ with balls of radius $2\cdot 2^{-j}$ and put  
\begin{equation}\label{Pjk}
P_{j,k}\define \bigcup_{l=1}^{\mathcal{H}_{j,k}}B(x_{j,k,l}, 2^{-j+1}).
\end{equation}
By our construction, for each $x_{j,k_j}$ it is possible to find some $x_{j+1,k_{j+1}}$ such that the induced distance on $\tilde{M}/G$ is 
$\text{dist}_{M/G} (x_{j,k_j},x_{j+1,k_{j+1}})\leq 2^{-j}$ from which we deduce $B_{M/G}(x_{j+1,k_{j+1}},2^{-j})\subset B_{M/G}(x_{j,k_j}, 2^{-j+1})$. This leads  to 
\[
P_{j+1,k_{j+1}}\subset P_{j,k_j}. 
\]
\end{minipage}\hfill \begin{minipage}{0.29\textwidth}
\begin{psfrags}
\includegraphics[width=4cm]{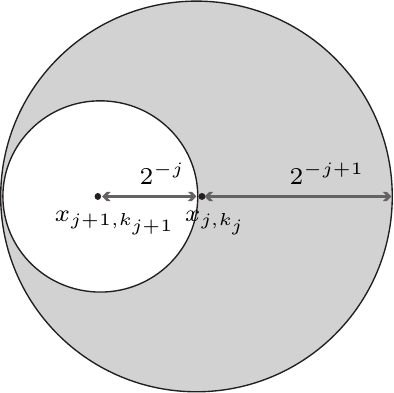}
\end{psfrags}
\end{minipage}\\

\begin{assumption}[\textbf{On the $G$-action}] \label{ass:G} There is a $G$-adapted covering, cf. Figure~\ref{fig:covering} on page~\pageref{fig:covering} and the explanations given aside, such that the following holds. 
 \begin{enumerate}[(i)]
  \item For all $x\in B(x_{j,k,\ell}, 2^{-j+1})$ we have 
  \begin{align}\label{est-Hjk} \mathcal{H}_{j,k} \gtrsim  \text{vol}_{n-\tilde{n}}(G\cdot x) 2^{j(n-\tilde{n})},\end{align}
 where $\tilde{n}$ denotes the dimension of the orbit space, cf. \eqref{tilde_n}. 
  \item There are constants $b>0$ and $\epsilon>0$  such that for all $x\in M$ with $\text{dist}_g(x, M\setminus \tilde{M})\geq r$ it holds 
  \begin{align} \label{est-volGx} \text{vol}_{n-\tilde{n}}(G\cdot x) \gtrsim \min\{ r^b, \epsilon\}.  \end{align}
  \item We have the estimate \begin{align}\label{est-Pjk}\vol(P_{j,k_j}\setminus P_{j+1,k_{j+1}})\gtrsim 2^{-jn}\mathcal{H}_{j,k_j},\end{align}
 \end{enumerate}
where the constants do not depend on $j$, $x$, $r$ (and for $(iii)$ not on the chosen $k_j$).
\end{assumption}

These assumptions are quite technical. We give two easy examples below. For a more detailed discussion we refer to Appendix~\ref{app}.

\begin{ex}\label{ex:rn}
 Let $SO(n)$ act on $\mR^n$ by rotation around the origin. For this action the orbit space is  $[0,\infty)$ if  $\tilde{n}\define \dim (\tilde{M}/G)=1$. Let $\{y_{k}^r\}_{k\in \mathcal{K}_r}$ be a $(r,1)$ discretization of the standard sphere $S^{n-1}$. Then by standard comparison of volumes we see that $|\mathcal{K}_r|\sim r^{1-n}$. Then $B(y_k^{2^{-j}}, 2^{-j+1})$  cover $\{x\in \mR^n\ |\ \text{dist}(x, S^{n-1})\leq 2^{-j}\}$. Thus, by the scaling invariance of $\mR^n$ the set of  the points $x_{j,k,l}= 2^{-k}y_l^{2^{-j}}$  gives a $G$-adapted covering. In particular, we have $\mathcal{H}_{j,k}\sim 2^{-(j-k)(1-n)}$ and $\vol_{n-1}(SO(n)\cdot x)\sim |x|^{n-1}$. Thus, for this covering Assumption~\ref{ass:G} is obviously true with $b=n-1$.\end{ex}

\begin{ex}
We consider $M=\mR^4\cong \mC\times \mC$ with $G=S^1\times S^1\cong SO(2)\times SO(2)\subset SO(4)$ acting on $M$ as follows
\[
G\times M\to M,\quad ((\alpha, \beta),(z_1,z_2))\mapsto (e^{\i \alpha} z_1, e^{\i \beta} z_2).
\]
\begin{minipage}{0.4\textwidth}
\begin{psfrags}
\includegraphics[width=6cm]{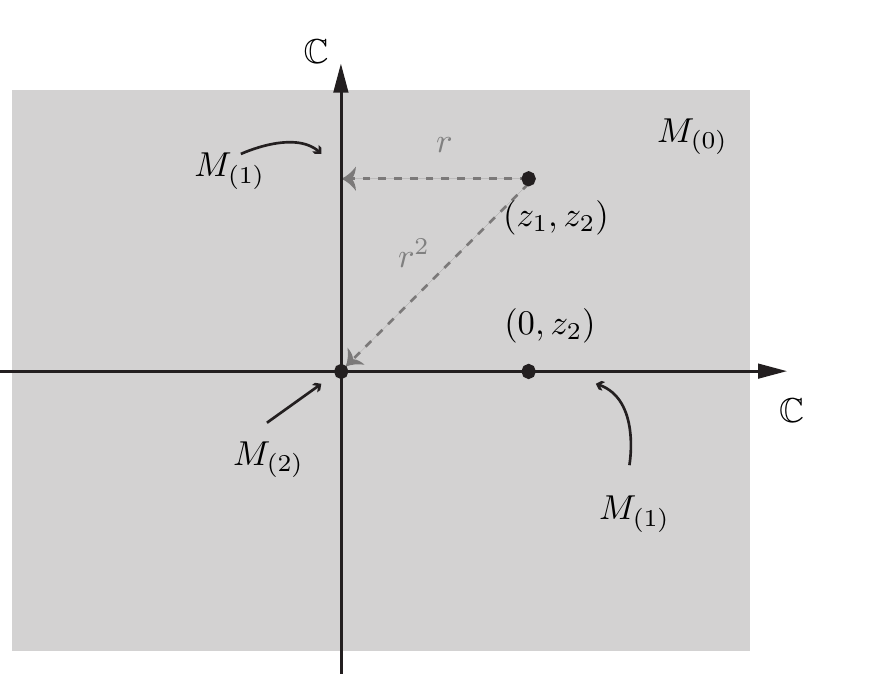}
\end{psfrags}
\end{minipage}\hfill \begin{minipage}{0.55\textwidth}
Our manifold stratifies as follows \[M=M_{(0)}\cup M_{(1)}\cup M_{(2)}\]
where $M_{(1)}=(\mathbb{C}\setminus \{0\}\times \{0\})\cup (\{0\}\times \mathbb{C}\setminus \{0\}) $ and  $M_{(2)}=\{(0,0)\}$
according to the following orbits: \\[0.3cm]
\renewcommand{\arraystretch}{1.2}
\begin{tabular}{c|c|c}
Strata & Orbit(s) & $\dim(\text{Orbit(s)})$\\
\hline
$\tilde{M}=M_{(0)}$& $S^1\times S^1$& $2$\\
\hline
$M_{(1)}$& $\{0\}\times S^1$, $S^1\times \{0\}$& $1$\\
\hline
$M_{(2)}$& $\{0\}$& $0$
\end{tabular}\\[0.3cm]

\end{minipage}\\
As for the volumes of the orbits we see that 
\begin{align*}
\vol_2(G\cdot (z_1,z_2))&=& (2\pi)^2|z_1|\cdot |z_2|
\sim \begin{cases}
r,&  z_2 \text{ fixed}, z_1=r\rightarrow 0,\\
r^2,& z_1=z_2=r \rightarrow 0\\
\end{cases}\\
\vol_1(G\cdot (z,0))&=& 2\pi |z_1|\sim r \quad z=r\rightarrow 0.
\end{align*}
Let $x_{j,k}\in \tilde{M}$. For $b_l=\dim(G\cdot x_{j,k})-\min\{\dim(G\cdot x)| \ x\in M_{(l)}, \ l=1,2\}$ we calculate 
\[
b=b_2=2-0=2, \qquad b_1=2-1=1.
\]
This also provides a counterexample demonstrating that the 'optimal choice' for $b$  in  \cite[p.~275/276]{Skr02} is not correct.  There it was stated that for $M=\rn$ and $G$ being a closed subgroup of $O(n)$ we should get $b=\min_{x\in S^{n-1}}\dim (G\cdot x)=1$, which is obviously false by the above considerations.  
\end{ex}

\section{Decay properties and local smoothness of \texorpdfstring{$G$}{G}-invariant functions}\label{sec:4}

In this section we state our main results concerning decay properties and local smoothness of $G$-invariant function. Our proofs heavily rely on the optimal atomic decomposition adapted to the $G$-action from Theorem \ref{thm:optimal-dec}.

\begin{thm}\label{thm-strauss}
 Let $(M^n,g)$ be a Riemannian manifold of bounded geometry and let $G$ be a compact subgroup of the group of isometries  $\mathrm{Isom}(M,g)$. We assume that the $G$-action on $M$ fulfills Assumptions~\eqref{est-Hjk} and~\eqref{est-Pjk}. Moreover, we put  $\tilde{n}\define \dim (\tilde{M}/G)$.
 \begin{itemize}
 \item[(i)] Let $1\leq p,q\leq \infty$ and  $s> \frac{\tilde{n}}{p}$ or ($s= \frac{\tilde{n}}{p}$ and $q=1$). Then, there is a constant $C>0$ such that 
\[ |f(x)|\leq C \vol_{n-\tilde{n}}(G\cdot x)^{-\frac{1}{p}} \Vert f\, |\, B^{s}_{p,q}(M)\Vert \]
holds for all $f\in R_GB^{s}_{p,q}(M)$ and $x\in \tilde{M}$.  
\item[(ii)] { Let $1\leq p<\infty$, $1\leq q\leq \infty$, and $s> \frac{\tilde{n}}{p}$ or ($s= \frac{\tilde{n}}{p}$ and $p=1$). Then, there is a constant $C>0$ such that 
\[ |f(x)|\leq C \vol_{n-\tilde{n}} (G\cdot x)^{-\frac{1}{p}} \Vert f\, |\, F^{s}_{p,q}(M)\Vert \]
holds for all $f\in R_GF^{s}_{p,q}(M)$ and $x\in \tilde{M}$. }
\end{itemize}
\end{thm}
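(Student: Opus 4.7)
The plan is to combine the optimal $G$-adapted atomic decomposition of Theorem~\ref{thm:optimal-dec} with Assumption~\ref{ass:G}. First, I would write $f \in R_G A^s_{p,q}(M)$ as a sum of heat atoms $f = \sum_{j,k,l} s_{j,k}\, a_{j,k,l}$, observing that the $G$-invariance of $f$ forces the coefficient $s_{j,k}$ to be constant along each orbit at scale $j$. Since every heat atom is an $(s,p)_{L,K}$-atom centered in a ball of radius $\sim 2^{-j}$, the atom estimate \eqref{atom-4} with $k=0$ yields $|a_{j,k,l}(y)| \lesssim 2^{-j(s-n/p)}$; combined with the uniform local finiteness of the cover $\{B(x_{j,k,l}, 2^{-j+1})\}_{k,l}$, only boundedly many atoms are nonzero at any $x \in \tilde M$, producing the key pointwise bound
\[
|f(x)| \lesssim \sum_{j=0}^\infty 2^{-j(s - n/p)}\, |s_{j,k_j(x)}|, \qquad x\in \tilde M,
\]
where $k_j(x)$ denotes any index such that $x$ belongs to some scale-$j$ ball centered on the orbit $G\cdot x_{j,k_j(x)}$.

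For (i), the $B$-norm equivalence in Theorem~\ref{thm:optimal-dec} immediately gives $|s_{j,k_j(x)}|^p\, \mathcal{H}_{j,k_j(x)} \leq b_j^p$ with $b_j := (\sum_k \mathcal{H}_{j,k}|s_{j,k}|^p)^{1/p}$ and $\|(b_j)_j\|_{\ell_q} \sim \|f|B^s_{p,q}(M)\|$. Applying Assumption~\eqref{est-Hjk} to replace $\mathcal{H}_{j,k_j(x)}^{-1/p}$ by $\vol_{n-\tilde n}(G\cdot x)^{-1/p}\, 2^{-j(n-\tilde n)/p}$ and substituting into the pointwise bound, a Hölder inequality in the $\ell_{q'}\cdot\ell_q$ pairing yields
\[
|f(x)| \lesssim \vol_{n-\tilde n}(G\cdot x)^{-1/p}\, \big\|(2^{-j(s-\tilde n/p)})_j\big\|_{\ell_{q'}}\, \|f|B^s_{p,q}(M)\|.
\]
The geometric series converges exactly under the stated hypothesis: $s > \tilde n/p$, or $s = \tilde n/p$ with $q = 1$ (so that $q' = \infty$).

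For (ii), the $L_p$-outside/$\ell_q$-inside structure of the $F$-norm precludes a direct Hölder estimate, so I would extract each scale's contribution separately. Using $(\sum_j a_j^q)^{1/q} \geq a_{j_0}$ together with the telescoping disjoint sets $A_j := P_{j,k_j(x)} \setminus P_{j+1,k_{j+1}(x)}$ (whose nestedness is recorded just before Assumption~\ref{ass:G}) and the fact that the disjoint innermost balls $B(x_{j,k_j(x),l}, 2^{-j-1})$ lie inside $\bigcup_l B(x_{j,k_j(x),l}, 2^{-j})$, where some $\chi_{j,k_j(x),l}$ equals one, I obtain
\[
\|f|F^s_{p,q}(M)\|^p \gtrsim \sum_{j=0}^\infty |s_{j,k_j(x)}|^p\, \mathcal{H}_{j,k_j(x)} \gtrsim \vol_{n-\tilde n}(G\cdot x) \sum_{j=0}^\infty 2^{j(n-\tilde n)}\,|s_{j,k_j(x)}|^p,
\]
where Assumption~\eqref{est-Pjk} secures the crucial volume estimate and \eqref{est-Hjk} provides the last step. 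Splitting $2^{-j(s-n/p)} = 2^{-j(s-\tilde n/p)}\cdot 2^{j(n-\tilde n)/p}$ in the pointwise bound and applying Hölder in the $\ell_{p'}\cdot\ell_p$ pairing now completes the proof, with the convergence of $\sum_j 2^{-j(s-\tilde n/p)p'}$ (or $\sup_j$ for $p = 1$) matching exactly the stated range: $s > \tilde n/p$, or $s = \tilde n/p$ with $p = 1$.

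The main obstacle lies in the lower bound step for the $F$-case: one needs to verify that within the telescoping difference $A_j$ the sub-level set where $\chi_{j,k_j(x),l} = 1$ for some $l$ still has volume $\gtrsim 2^{-jn}\mathcal{H}_{j,k_j(x)}$. This is precisely where Assumption~\eqref{est-Pjk} is indispensable, in combination with the disjointness of the innermost balls of radius $2^{-j-1}$ guaranteed by the $(2^{-j},2)$-discretization adapted to the $G$-action.
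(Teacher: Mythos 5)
Your proposal is correct and follows essentially the same route as the paper: the optimal heat-atom decomposition of Theorem~\ref{thm:optimal-dec}, the pointwise bound via the atom normalization \eqref{atom-4}, Assumption~\eqref{est-Hjk} to trade $\mathcal{H}_{j,k_j}$ for $\vol_{n-\tilde n}(G\cdot x)2^{j(n-\tilde n)}$, H\"older in $\ell_{q'}\cdot\ell_q$ for the $B$-case and in $\ell_{p'}\cdot\ell_p$ for the $F$-case, and the telescoping/disjointness argument with \eqref{est-Pjk} to dominate $\sum_j\mathcal{H}_{j,k_j}|s_{j,k_j}|^p$ by the $F^s_{p,\infty}$-norm. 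The only differences are organizational (you run the $F$-case lower bound before the H\"older step, and skip the explicit $\sup_{i\le j}$ monotonization the paper uses in \eqref{help1a}), and the radius mismatch you flag at the end is present in the paper's own argument as well.
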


\begin{proof} \underline{Step 1:} We prove (i).  Let $x\in \tilde{M}$ and $f\in R_GB^s_{p,q}(M)$. By Theorem~\ref{thm:optimal-dec} there exists an atomic decomposition 
 \[
f=\sum_{j=0}^{\infty}\sum_{k=0}^{\mathcal{H}_{j}}\sum_{l=0}^{\mathcal{H}_{j,k}}s_{j,k}a_{j,k,l}
\quad \text{\ such that\ }\quad 
\left(\sum_{j=0}^{\infty}\left(\sum_{k=0}^{\mathcal{H}_j}\mathcal{H}_{j,k}|s_{j,k}|^p\right)^{q/p}\right)^{1/q}<\infty
 \]
 (with the usual modifications if $p=\infty$ or $q=\infty$).
 Then the main part of $f$ near $x$ is given by the function 
 \begin{equation}\label{main_f}
f^M(y)=\sum_{j=0}^{\infty}s_{j,k_j}a_{j,k_j,l_j}(y)
 \end{equation}
 where $k_j$, $l_j$ are chosen such that $\text{dist}_g(x,x_{j,k_j,l_j})$ is minimal. Then in particular $x_{j,k_j,\ell_j}\in B(x, 2^{-j+1})$ since $x\in \supp \ a_{j,k_j,l_j}\subset B(x_{j,k_j,l_j}, 2^{-j+1})$. In fact, $f$ is  a finite sum of functions of the above type, cf. Figure \ref{fig:covering} on page \pageref{fig:covering} and the explanations given there. 
 Without loss of generality we give an estimate for the main part of $f$ only. 
 We use the estimate  \eqref{est-Hjk}, i.e., $\mathcal{H}_{j,k}\gtrsim \vol_{n-\tilde{n}} (G\cdot x) 2^{j(n-\tilde{n})}$ for all 
 $x\in B(x_{j,k,\ell}, 2^{-j+1})$.  Now together with
the normalization of atoms, see \eqref{atom-4}, we get 
 \begin{align}
 |f^M(y)|
 	&\leq  \sum_{j=0}^{\infty}2^{-j(s-\frac np)}|s_{j,k_j}|\notag\\
 	&= \sum_{j=0}^{\infty}2^{-j(s-\frac np)}\frac{2^{\frac{-j(n-\tilde{n})}{p}}}{\vol_{n-\tilde{n}} (G\cdot x)^{1/p}}\cdot \vol_{n-\tilde{n}} (G\cdot x)^{1/p}2^{\frac{j(n-\tilde{n})}{p}}|s_{j,k_j}|\notag\\
 	&\lesssim  \vol_{n-\tilde{n}} (G\cdot x)^{-1/p}\sum_{j=0}^{\infty}2^{-j(s-\frac {\tilde{n}}{p})}\cdot \mathcal{H}_{j,k_j}^{1/p}|s_{j,k_j}|\notag\\
 	&\leq  \vol_{n-\tilde{n}} (G\cdot x)^{-1/p}\sum_{j=0}^{\infty}2^{-j(s-\frac {\tilde{n}}{p})}\left(\sum_{k=0}^{\mathcal{H}_{j}}\mathcal{H}_{j,k} |s_{j,k}|^p\right)^{1/p}\notag\\
 	&\lesssim  \vol_{n-\tilde{n}} (G\cdot x)^{-1/p}\|f|B^s_{p,q}(M)\|,\label{help-1}  
 \end{align}
 where in the last line we used H\"older's inequality if  $s>\frac{\tilde{n}}{p}$ (and nothing for  $s=\frac{\tilde{n}}{p}$ and $q= 1$). The suppressed constants do not depend on $f$ and $x$.  \\
 \underline{Step 2:} We turn towards (ii). 
Let $s>\frac{\tilde{n}}{p}$, $1\leq p<\infty$ or ($s=\frac{\tilde{n}}{p}$ and $p=1$). We deal with the spaces with  $q=\infty$ only, since the rest follows from the embedding 
 $F^s_{p,q}(M)\hookrightarrow  F^{s}_{p,\infty}(M)$. 
 Again, we consider the main part of $f$ according to \eqref{main_f}. Using the very same calculation as in \eqref{help-1}, except the last two lines, we have 
 \begin{align*}
 |f^M(x)|& \leq  \vol_{n-\tilde{n}} (G\cdot x)^{-1/p}\sum_{j=0}^{\infty}2^{-j(s-\frac{\tilde{n}}{p})}\mathcal{H}_{j,k_j}^{1/p}|s_{j,k_j}|
 \end{align*}
With Hölder's inequality if $s>\frac{\tilde{n}}{p}$ (and nothing for $s=\frac{\tilde{n}}{p}$ and $p=1$) we obtain
\begin{align}
  |f^M(x)| &\leq   \vol_{n-\tilde{n}} (G\cdot x)^{-1/p}\left(\sum_{j=0}^{\infty}\mathcal{H}_{j,k_j}|s_{j,k_j}|^p\right)^{1/p}\notag\\
 &\leq  \vol_{n-\tilde{n}} (G\cdot x)^{-1/p}\left(\sum_{j=0}^{\infty}\sup_{i=0,\ldots,j}\left(|s_{i,k_i}|^p2^{in}\right)2^{-jn}\mathcal{H}_{j,k_j}\right)^{1/p}.\label{help1}
 \end{align}
  Let now  $\tilde{\chi}_{j,k}$ denote the characteristic function of $P_{j,k}=\bigcup_{l=1}^{\mathcal{H}_{j,k}}B(x_{j,k,l}, 2^{-j+1})$, cf. \eqref{Pjk}.  We shall proceed with 
 \begin{align}
 \sum_{j=0}^{\infty}&\sup_{i=0,\ldots, j}2^{i\frac np}|s_{i,k_i}|\left(\tilde{\chi}_{j,k_j}(\cdot)-\tilde{\chi}_{j+1,k_{j+1}}(\cdot)\right)\notag\\
 &= \sup_{i\in \mathbb N_0}\sum_{j=i}^{\infty}2^{i\frac np}|s_{i,k_i}|\left(\tilde{\chi}_{j,k_j}(\cdot)-\tilde{\chi}_{j+1,k_{j+1}}(\cdot)\right) = \sup_{i\in \mathbb N_0}2^{i\frac np}|s_{i,k_i}|\tilde{\chi}_{i,k_i}(\cdot),\label{help1a}
 \end{align}
 where we use the fact that  we have a telescopic sum. Now taking the $L_p$-norm on both sides and using the fact that $P_{j,k_j}\setminus P_{j+1,k_{j+1}}$ are disjoint sets for different values of $j$ yields 
 \begin{align}
 \left\|\sup_{i\in \mathbb N_0} \right.& \left. 2^{i\frac np}|s_{i,k_i}|{\tilde{\chi}}_{i,k_i}(\cdot)\bigg|L_p(M)\right\|^p\notag \\
 &= \left\|\sum_{j=0}^{\infty}\sup_{i=0,\ldots, j}2^{i\frac np}|s_{i,k_i}|\left(\tilde{\chi}_{j,k_j}(\cdot)-\tilde{\chi}_{j+1,k_{j+1}}(\cdot)\right)\bigg|L_p(M)\right\|^{p}\notag\\
 &= \sum_{j=0}^{\infty}\sup_{i=0,\ldots, j}\left(2^{jn}|s_{i,k_i}|^p\right)\vol(P_{j,k_j}\setminus P_{j+1,k_{j+1}})\notag\\
 &\gtrsim \sum_{j=0}^{\infty}\sup_{i=0,\ldots, j}\left(2^{jn}|s_{i,k_i}|^p\right) 2^{-jn}\mathcal{H}_{j,k_j},\label{help2}
 \end{align}
where in the last step we used assumption~\eqref{est-Pjk}. 
 Now \eqref{help1} and \eqref{help2} yield 
 \begin{align}
 |f^M(x)|
 &\leq  \vol_{n-\tilde{n}} (G\cdot x)^{-1/p} \left\|\sup_{i\in \mathbb N_0}2^{i \frac np}|s_{i,k_i}|\tilde{\chi}_{i,k_i}(\cdot)\bigg|L_p(M)
 \right\|\notag\\
 &\lesssim \vol_{n-\tilde{n}} (G\cdot x)^{-1/p} \left\|\sup_{i\in \mathbb N_0}2^{i \frac np}|s_{i,k_i}|\sum_{l=1}^{\mathcal{H}_{i,k_i}}{\chi}_{i,k_i,l}(\cdot)\bigg|L_p(M)
\right\|\notag\\
 &\lesssim vol_{n-\tilde{n}} (G\cdot x)^{-1/p} \left\|\sup_{i\in \mathbb N_0}2^{i \frac np}\sum_{k=0}^{\mathcal{H}_i}\sum_{l=1}^{\mathcal{H}_{i,k_i}}|s_{i,k}|{\chi}_{i,k,l}(\cdot)\bigg|L_p(M)\right\|\notag\\
& \sim \vol_{n-\tilde{n}} (G\cdot x)^{-1/p} \|f|F^{s}_{p,\infty}(M)\|, \label{help-F}
 \end{align}
 which completes the proof. 
\end{proof}

\begin{rem}
The restriction $1\leq p,q\leq \infty$ in Theorem~\ref{thm-strauss} comes from the fact that we need to use the atomic decomposition via heat atoms from Theorem~\ref{thm:optimal-dec}, which only works for this range of parameters. 
\end{rem}

Before studying H\"older regularity of  $G$-invariant functions, we add the following observation about the convergence of the partial sums of our atomic decompositions. Let $f\in R_GA^s_{p,q}(M)$, $A\in \{B,F\}$,  have an optimal atomic decomposition 
\[f=\sum_{j=0}^{\infty}\sum_{k=0}^{\mathcal{H}_{j}}\sum_{l=1}^{\mathcal{H}_{j,k}}s_{j,k}a_{j,k,l}\]
as provided by Theorem~\ref{thm:optimal-dec}. Consider the partial sums
\[S^Jf=\sum_{j=0}^{J}\sum_{k=0}^{\mathcal{H}_j}\sum_{l=1}^{\mathcal{H}_{j,k}}s_{j,k}a_{j,k,l}.\] 

\begin{lem}\label{lem-trace} 
Let $1\leq p<\infty$ and $1\leq q\leq \infty$. Suppose either $s>\frac{\tilde{n}}{p}$  or  $s=\frac{\tilde{n}}{p}$ with  $q=1$ for B-spaces. For F-spaces we assume  $s>\frac{\tilde{n}}{p}$  or  $s=\frac{\tilde{n}}{p}$ with $p=1$.  Then $f\in R_GA^s_{p,q}(M)$, $A\in \{B,F\}$,  implies $f$ is continuous in $\tilde{M}$ and 
\[
\lim_{J\rightarrow \infty}\|f-S^Jf\ \big|\ L_{\infty}(\tilde{M})\|=0,
\]
i.e., we have uniform convergence of the partial sums on $\tilde{M}$. 
\end{lem}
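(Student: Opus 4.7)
The plan is to apply the pointwise Strauss bound of Theorem~\ref{thm-strauss} directly to the tail $g_J \define f - S^J f$. Two observations make this clean. First, since the coefficients satisfy $s_{j,k,l_1} = s_{j,k,l_2} = s_{j,k}$, every partial sum $S^J f$ is itself $G$-invariant, hence so is $g_J$. Second, the truncation
\[
g_J = \sum_{j>J}\sum_{k=0}^{\mathcal{H}_j}\sum_{l=1}^{\mathcal{H}_{j,k}} s_{j,k}\,a_{j,k,l}
\]
is an $(s,p,r,m)$-heat atom decomposition in its own right, so Theorem~\ref{thm:optimal-dec} gives
\[
\|g_J\,|\,A^s_{p,q}(M)\| \sim T_J,
\]
where $T_J$ denotes the $j>J$ tail of the sequence norm in Theorem~\ref{thm:optimal-dec}. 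Since the full sum $T_0$ is finite, $T_J \to 0$ as $J \to \infty$ (by Hölder's inequality or monotone convergence, according to whether $s>\tilde n/p$ or $s=\tilde n/p$ with $q=1$, resp.\ $p=1$).

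Applying Theorem~\ref{thm-strauss} to $g_J$ in the assumed parameter range then yields the pointwise bound
\[
|g_J(x)| \leq C\, \vol_{n-\tilde n}(G\cdot x)^{-1/p}\, T_J, \qquad x \in \tilde M.
\]
Each partial sum $S^J f$ is a locally finite sum of smooth heat atoms---local finiteness is inherited from the uniformly locally finite multiplicity of the $G$-adapted covering $\Omega_j$---and is therefore smooth on $M$. Combined with the above estimate this yields uniform convergence $S^J f \to f$ on every subset of $\tilde M$ on which $\vol_{n-\tilde n}(G\cdot x)$ is bounded away from zero, and hence in particular on every compact subset of $\tilde M$. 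Thus $f$ is continuous on $\tilde M$ as a locally uniform limit of smooth functions.

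The main obstacle is the passage from locally uniform convergence to $L_\infty(\tilde M)$-convergence, since the weight $\vol_{n-\tilde n}(G\cdot x)^{-1/p}$ is not uniformly bounded on $\tilde M$. To close this step I would reinspect the Strauss estimate in weighted form: tracing through \eqref{help-1} and \eqref{help-F}, the inequalities are actually uniform in $x$ after multiplying both sides by $\vol_{n-\tilde n}(G\cdot x)^{1/p}$, so that the tail bound reads $\vol_{n-\tilde n}(G\cdot x)^{1/p}|g_J(x)| \lesssim T_J$ uniformly in $x \in \tilde M$. Together with the fact that $f$ is already pointwise finite on $\tilde M$ (again by Theorem~\ref{thm-strauss}) and that $T_J \to 0$, this weighted smallness is what is actually required for the subsequent trace-type applications of the lemma, and under the stated parameter assumptions it delivers the uniform control of $f - S^J f$ on $\tilde M$ claimed in the statement.
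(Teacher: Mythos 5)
Your proposal is essentially the paper's own argument: the paper likewise takes the tail $\sum_{j>J}$ of the optimal decomposition from Theorem~\ref{thm:optimal-dec}, notes that the corresponding tail of the sequence norm tends to zero, and reruns the computation \eqref{help-1} (resp.\ \eqref{help-F}) to arrive at exactly the bound $|f^M(x)-S^Jf^M(x)|\lesssim \vol_{n-\tilde n}(G\cdot x)^{-1/p}\,\delta$ of \eqref{help-B}. The ``obstacle'' you flag at the end --- that the weight $\vol_{n-\tilde n}(G\cdot x)^{-1/p}$ is unbounded on $\tilde M$, so this estimate by itself yields only locally uniform convergence on sets where the orbit volume is bounded away from zero --- is not resolved in the paper either: the proof there stops at \eqref{help-B} and simply asserts convergence ``in the uniform norm,'' so you have not missed a step that the authors supply, and your explicit acknowledgement of the issue is if anything more careful. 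One small caveat on your shortcut of applying Theorem~\ref{thm-strauss} to $g_J$ as a black box: that requires $g_J\in R_GA^s_{p,q}(M)$, and equality of the coefficients $s_{j,k,l}$ over $l$ does not by itself make the partial sums $G$-invariant (the individual heat atoms are built from a non-invariant partition of unity); the paper sidesteps this by re-estimating the tail sum directly, which only uses the structure of the decomposition and not invariance of $S^Jf$.
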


\begin{proof} Concerning B-spaces, thanks to $f\in R_GB^s_{p,q}(M)$ we know that  for each $\delta>0$ there exists a number $J_0$ such that 
\[
\left(\sum_{j=J+1}^{\infty}\left(\sum_{k=0}^{\mathcal{H}_j}\mathcal{H}_{j,k}|s_{j,k}|^p\right)^{q/p}\right)^{1/q}<\delta,  
\]
for all $J\geq J_0$. Consider $x\in \tilde{M}$.  Then as in \eqref{help-1} we get 
\begin{equation}\label{help-B}
|f^M(x)-S^Jf^M(x)|=\left|\sum_{j=J+1}^{\infty}s_{j,k_j}a_{j,m_j,l_j}(x)\right|\lesssim \vol_{n-\tilde{n}} (G\cdot x)^{-1/p}\delta,
\end{equation}
for all $J\geq J_0$, where the suppressed constants are independent of $x$ and $\delta$. Hence, we have convergence of $S^Jf$ in the uniform norm and for that reason the limit itself is a continuous function. The proof for F-spaces follows along the same lines, using that if $f\in R_GF^s_{p,q}(M)$ for each $\delta>0$ there exists a number $J_0$ such that 
\[
 \left\|\left(\sum_{j=J+1}^{\infty}2^{j\frac{nq}{p}}\left(\sum_{k=0}^{\mathcal{H}_j}\sum_{l=1}^{\mathcal{H}_{j,k}}|s_{j,k}|\chi_{j,k,l}(\cdot)\right)^q\right)^{1/q}\Bigg|L_p(M)\right\|<\delta, 
\]
for all $J\geq J_0$ 
and then \eqref{help-F} instead of \eqref{help-1} in \eqref{help-B}. In particular, the supremum in \eqref{help-F}  now is $\displaystyle  \sup_{i=J+1,\ldots, \infty}$, since  in \eqref{help1} we now have  $\displaystyle  \sum_{j=J+1}^{\infty}\sup_{i=J+1,\ldots, j}=\sup_{i=J+1,\ldots,\infty}\sum_{j=i, \ldots, \infty}$ in \eqref{help1a}. Thus, in \eqref{help-F} we end up with $\displaystyle  \sup_{i=J+1,\ldots, \infty}$.  
This proves the lemma.  
\end{proof}

We now investigate H\"older regularity in the main stratum of $G$-invariant functions. In particular,  this will show that Lemma \ref{lem-trace},  which gives continuity of $f$ on $\tilde{M}$,  can be strengthened. \\

\begin{thm}[\textbf{H\"older regularity away from the singular strata}]\label{hoelder-reg}  Let $(M^n,g)$ be a Riemannian manifold of bounded geometry and let $G$ be a compact subgroup of  $\mathrm{Isom}(M,g)$. We assume that the $G$-action on $M$ fulfills assumptions~\eqref{est-Hjk} and~\eqref{est-volGx}. Moreover, let   $1\leq p,q\leq \infty$ ($p<\infty$ for $F$-spaces), and   $s>\frac{\tilde{n}}{p}$. Furthermore, let $f\in R_GA^s_{p,q}(M)$ with $A\in \{B,F\}$ such that $\supp f\subset \tilde{M}$. Then $f$ belongs to the H\"older-Zygmund space 
\begin{equation*}
{ f}\in \mathcal{C}^{s_0}(\tilde{M}), \qquad s_0=s-\frac{\tilde{n}}{p},
\end{equation*}
defined in \eqref{def-zygmund-M}. 
Moreover, there is an $r_0>0$ such that 
\begin{equation*}
\|{f}|\mathcal{C}^{s_0}({M})\|\gtrsim r^{-b/p}\|f|A^s_{p,q}(M)\|
\end{equation*}
for all $f$ with $\supp f\subset \tilde{M}_r=\{x\in M\ |\ \mathrm{dist}(x, M\setminus\tilde{M})\geq r\}$ for $r<r_0$.  The constant of the inequality depends on $r_0$ but not on $r$ and  $b$ is the constant from Assumption~\eqref{est-volGx}.
\end{thm}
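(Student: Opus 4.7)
The plan is to reinterpret the optimal atomic decomposition of Theorem~\ref{thm:optimal-dec} as an atomic decomposition of $f$ for the H\"older--Zygmund space $\mathcal{C}^{s_0}(M)=B^{s_0}_{\infty,\infty}(M)=F^{s_0}_{\infty,\infty}(M)$ in the sense of Theorem~\ref{thm:atomic-dec}, after rescaling the building blocks and coefficients by $2^{-j(n-\tilde n)/p}$. Starting from
\[
f=\sum_{j=0}^{\infty}\sum_{k=0}^{\mathcal{H}_j}\sum_{l=1}^{\mathcal{H}_{j,k}} s_{j,k}\,a_{j,k,l},
\]
with each $a_{j,k,l}$ an $(s,p)_{L,K}$-atom (Remark~\ref{heat-atom-rem}), I would set $\tilde a_{j,k,l}\define c\,2^{-j(n-\tilde n)/p}a_{j,k,l}$ and $\tilde s_{j,k,l}\define c^{-1}\,2^{j(n-\tilde n)/p}s_{j,k}$ for a suitable absolute constant $c>0$. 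The bound \eqref{atom-4} then translates into $|\nabla^k \tilde a_{j,k,l}|\lesssim 2^{-j(s_0-k)}$, i.e.\ exactly the atomic bound \eqref{atom-4} with $s$ replaced by $s_0=s-\tilde n/p$ and $p=\infty$. Since $s_0>0$, no moment conditions are needed ($K=-1$ is admissible for $p=q=\infty$), and the smoothness index $L$ of the original decomposition already dominates the threshold $([s_0]+1)_+$ required by Theorem~\ref{thm:atomic-dec} for the target space; hence $\tilde a_{j,k,l}$ qualifies as an $(s_0,\infty)_{L,-1}$-atom.

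The crucial step is the $\ell^\infty$-estimate
\[
\sup_{j,k,l} |\tilde s_{j,k,l}| \;\lesssim\; r^{-b/p}\,\|f|A^s_{p,q}(M)\|.
\]
For this I would use the support condition $\supp f\subset \tilde M_r$ together with the heat-kernel construction of the atoms recalled in Subsection~\ref{ssec:op}: since $s_{j,k}$ is built from suprema of $(-\Delta)^r H_t((I-\Delta)^{-m}f)$ over $Q_{j,u}$ at time $t\sim 4^{-j}$, Gaussian heat-kernel tails make $|s_{j,k}|$ exponentially small for centers $x_{j,k,l}$ lying at distance $\gg 2^{-j}$ from $\supp f$. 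These contributions are harmless against the polynomial factor $2^{j(n-\tilde n)/p}$, so only centers with $\text{dist}_g(x_{j,k,l},\tilde M_r)\lesssim 2^{-j}$ actually matter. For $r<r_0$ small enough, Assumption~\eqref{est-volGx} then gives $\vol_{n-\tilde n}(G\cdot x_{j,k,l}) \gtrsim r^b$, and Assumption~\eqref{est-Hjk} upgrades this to $\mathcal{H}_{j,k}\gtrsim r^b\,2^{j(n-\tilde n)}$. Substituting produces
\[
|\tilde s_{j,k,l}| \;=\; c^{-1}\,2^{j(n-\tilde n)/p}\,|s_{j,k}| \;\lesssim\; r^{-b/p}\,\mathcal{H}_{j,k}^{1/p}|s_{j,k}|,
\]
and $\sup_{j,k}\mathcal{H}_{j,k}^{1/p}|s_{j,k}|\lesssim \|f|A^s_{p,q}(M)\|$ follows directly from Theorem~\ref{thm:optimal-dec}: for $B$-spaces via $\ell^q\hookrightarrow\ell^\infty$ in $j$ combined with $(\sum_k \mathcal{H}_{j,k}|s_{j,k}|^p)^{1/p}\geq \sup_k \mathcal{H}_{j,k}^{1/p}|s_{j,k}|$, and for $F$-spaces by integrating $(2^{jn/p}|s_{j,k}|)^p$ over the pairwise disjoint balls $B(x_{j,k,l},2^{-j-1})$ inside the $L_p$-norm, which pulls out the factor $\mathcal{H}_{j,k}$.

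Applying the converse direction of Theorem~\ref{thm:atomic-dec} for $B^{s_0}_{\infty,\infty}(M)$ now yields $f\in \mathcal{C}^{s_0}(M)$ together with the desired norm inequality, and since $\supp f\subset \tilde M$ this gives $f\in \mathcal{C}^{s_0}(\tilde M)$. The step I expect to be most delicate is the reduction to active centers close to $\tilde M_r$: while the exponential decay of Gaussian heat kernels certainly suppresses atoms whose centers lie far from $\supp f$, turning this heuristic into a clean quantitative bound requires carefully controlling the interplay between $H_t$, the regularising factors $(I-\Delta)^{\pm m}$, and the prefactor $2^{j(s+2m-n/p-2r)}$ in the definition of $s_{j,k}$. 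A safer alternative, avoiding these heat estimates altogether, is to first truncate $f$ by a smooth cut-off supported in a slight neighbourhood of $\tilde M_r$ and then apply the rescaling argument to the truncated sum, absorbing the truncation error into the original $A^s_{p,q}$-norm so that the final bound is unaffected.
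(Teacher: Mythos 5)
Your proposal follows essentially the same route as the paper: rescale the heat atoms of the optimal decomposition by $2^{-j(n-\tilde{n})/p}$ so that they become $(s_0,\infty)$-atoms with $K=-1$, use Assumptions~\eqref{est-Hjk} and~\eqref{est-volGx} to trade $2^{j(n-\tilde{n})/p}$ for $r^{-b/p}\mathcal{H}_{j,k}^{1/p}$, and dominate the resulting coefficient norm by the $B^s_{p,q}$- resp.\ $F^s_{p,q}$-coefficient norm via exactly the two mechanisms you describe (the paper merely passes through $B^{s_0}_{\infty,q}$ before embedding into $B^{s_0}_{\infty,\infty}$, a cosmetic difference from your direct $\ell^\infty$ bound). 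The localization issue you single out as delicate --- coefficients of heat atoms whose centers lie near the singular strata, where \eqref{est-volGx} provides no lower bound on $\vol_{n-\tilde{n}}(G\cdot x)$ --- is passed over silently in the paper's proof, so your explicit acknowledgment of it and the proposed cut-off workaround make your argument, if anything, more careful than the published one.
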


\begin{proof} \underline{\emph{Step 1:}} The case $p=\infty$ is clear observing that $\mathcal{C}^{s_0}(M)=B^{s_0}_{\infty,\infty}(M)$ in terms of equivalent quasi-norms and using the elementary embedding 
\[
B^{s_0}_{\infty,q}(M)\subset B^{s_0}_{\infty,\infty}(M)\quad \text{for all }q. 
\]
\underline{\emph{Step 2:}} Let $p<\infty$ and consider $B$-spaces. Let $f\in R_GB^s_{p,q}(M)$ for some $s>\tilde{n}/p$. According to Theorem~\ref{thm:optimal-dec}, $f$ has the following optimal atomic decomposition
 \begin{equation}\label{dec-adapted}
f(x)=\sum_{j=0}^{\infty}\sum_{k=0}^{\mathcal{H}_j}\sum_{l=1}^{{\mathcal{H}}_{j,k}}s_{j,k}a_{j,k,l}(x)
 \end{equation}
with $a_{j,k,l}$ being an $(s,p,r,m)$-atom with $m=0$ and $r\geq [s]+1$ and 
\[
\|f|B^s_{p,q}(M)\|\sim \left(\sum_{j=0}^{\infty}\Bigg(\sum_{k=0}^{{\mathcal{H}}_j}\sum_{l=1}^{{\mathcal{H}}_{j,k}}|s_{j,k}|^p\Bigg)^{q/p}\right)^{1/q}.
\]
Put 
\begin{equation}\label{trace-0}
b_{j,k,l}(x)\define 2^{-j\left(\frac{n-\tilde{n}}{p}\right)}a_{j,k,l}(x).
\end{equation}
Since $a_{j,k,l}$ is an $(s,p,r,m)$-heat atom with $m=0$, from Remark \ref{heat-atom-rem} we deduce that it is also an 
 $(s,p)_{r,-1}$-atom, thus,   the functions $b_{j,k,l}(x)$ are $(s-\frac{\tilde{n}}{p},\infty)_{r,-1}$-atoms on $M$: 
\begin{itemize}
\item[$\bullet$] $\supp\, b_{j,k,l}=\supp\, a_{j,k,l}$\\
\item[$\bullet$] $\sup |D^{\alpha}b_{j,k,l}|=\sup \left|D^{\alpha}\left(2^{-j\left(\frac{n-\tilde{n}}{p}\right)}a_{j,k,l}\right)\right|\leq 2^{-j\left(\frac{n-\tilde{n}}{p}+s-\frac np-|\alpha|\right)}=2^{-j(s-\frac{\tilde{n}}{p}-|\alpha|)}$\\
\item[$\bullet$]  No moment conditions are needed since  $K=-1.$ 
\end{itemize}
 Since $s>s_0\define s-\frac{\tilde{n}}{p}>0$  we deduce $r\geq [s_0]+1$. It follows from Theorem \ref{thm:atomic-dec} that 
 \begin{equation}\label{trace-1}
f(x)=\sum_{j=0}^{\infty}\sum_{k=0}^{\mathcal{H}_j}\sum_{l=1}^{\mathcal{H}_{j,k}}s_{j,k}2^{j\frac{(n-\tilde{n})}{p}}b_{j,k,l}(x)
 \end{equation}
 satisfies
 \begin{align*}
 \|f|B^{s-\frac{\tilde{n}}{p}}_{\infty,q}(M)\|
 &\lesssim  \left(\sum_{j=0}^{\infty}\left(\sup_{\mtop{k=0,\ldots, \mathcal{H}_j,}{l=1,\ldots, \mathcal{H}_{j,k}}}|s_{j,k}|2^{j\frac{n-\tilde{n}}{p}}\right)^q\right)^{1/q}.
 \end{align*}
 From Assumptions~\eqref{est-Hjk} and~\eqref{est-volGx} we have $\mathcal{H}_{j,k} \gtrsim  \min\{ r^b, \epsilon\} 2^{j(n-\tilde{n})}$ for $\text{dist}_g(x_{j,k,\ell}, M\setminus \tilde{M})\geq r$. Choose $r_0= \epsilon^{1/b}$ and let $r\leq r_0$. Then
 
 \begin{align*}
  \|f|B^{s-\frac{\tilde{n}}{p}}_{\infty,q}(M)\| 
 &\lesssim  \left(\sum_{j=0}^{\infty}\left(\sup_{k=0,\ldots, \mathcal{H}_j}|s_{j,k}|\mathcal{H}_{j,k}^{1/p}r^{-b/p}\right)^q\right)^{1/q}\\
 &\leq  r^{-b/p}\left(\sum_{j=0}^{\infty}
 \left(\sum_{k=0}^{\mathcal{H}_j}
 \left(|s_{j,k}|\mathcal{H}_{j,k}^{1/p}\right)^p
 \right)^{q/p}\right)^{1/q}\\
 &\leq  r^{-b/p}\left(\sum_{j=0}^{\infty}\left(\sum_{k=0}^{\mathcal{H}_j}\sum_{l=1}^{\mathcal{H}_{j,k}}|s_{j,k}|^p\right)^{q/p}\right)^{1/q}\sim r^{-b/p}\|f|B^{s}_{p,q}(M)\|,\\
 \end{align*}
 where the suppressed constants do not depend on $f$.  The fact that  $f\in \mathcal{C}^{s_0}(M)$ for $s_0=s-\frac{\tilde{n}}{p}$ is now a consequence of Step 1. \\ 
\underline{\emph{Step 3:}} Let $p<\infty$ and consider the $F$-spaces.  Again, $f\in R_GF^s_{p,q}(M)$ can be decomposed as in \eqref{dec-adapted} with 
\[\|f|F^s_{p,q}(M)\|\sim \left\|\left(\sum_{j=0}^{\infty}2^{j\frac{nq}{p}}\left(\sum_{k=0}^{\mathcal{H}_j}\sum_{l=1}^{\mathcal{H}_{j,k}}|s_{j,k}|\chi_{j,k,l}(\cdot)\right)^q\right)^{1/q}\Bigg|L_p(M)\right\|.\]
Again using the rescaled decomposition from \eqref{trace-0} and \eqref{trace-1} we get 
\begin{align}
\|f|B^{s-\frac{\tilde{n}}{p}}_{\infty,\infty}(M)\|
&= \sup_{j\in \mathbb N_0}\sup_{\mtop{k=0,\ldots, \mathcal{H}_j}{l=1,\ldots, \mathcal{H}_{j,k}}}|s_{j,k}|2^{j\frac{(n-\tilde{n})}{p}}\notag\\
&\lesssim  r^{-b/p}\sup_{j\in \mathbb N_0}\sup_{{k=0,\ldots, \mathcal{H}_j}}
\mathcal{H}_{j,k}^{1/p}|s_{j,k}|.\label{hoelder-est-1}
\end{align}
Now using that $\chi_{j,k,l}$ is the characteristic function of $B(x_{j,k,l},2^{-j})$, our covering is locally finite, and  the bounded geometry of $M$ implies  $\vol  B(x,2^{-j}) \gtrsim 2^{-jn}$, \cite[Lem. 2.2 and below]{Heb99}, we obtain
\begin{align*}
\mathcal{H}_{j,k}^{1/p}
= 2^{jn/p}\left(2^{-jn}\mathcal{H}_{j,k}\right)^{1/p}\lesssim   \left\|2^{jn/p}\sum_{l=1}^{\mathcal{H}_{j,k}}\chi_{j,k,l}(\cdot)\Bigg| L_p(M)\right\|.
\end{align*}
Inserting this estimate in \eqref{hoelder-est-1},  gives 
\begin{align*}
\|f|B^{s-\frac{\tilde{n}}{p}}_{\infty,\infty}(M)\|
&\leq   r^{-b/p}\left\|\sup_{j\in \mathbb N_0}2^{jn/p}\sup_{{k=0,\ldots, \mathcal{H}_j}}
\sum_{l=1}^{\mathcal{H}_{j,k}}|s_{j,k}|\chi_{j,k,l}(\cdot)\Bigg| L_p(M)\right\|\\
&\leq  r^{-b/p}\left\|\left(\sum_{j=0}^{\infty}2^{jnq/p}\left(\sup_{{k=0,\ldots, \mathcal{H}_j}}
\sum_{l=1}^{\mathcal{H}_{j,k}}|s_{j,k}|\chi_{j,k,l}(\cdot)\right)^q\right)^{1/q}\Bigg| L_p(M)\right\|\\
&\leq  r^{-b/p}\left\|\left(\sum_{j=0}^{\infty}2^{jnq/p}\left(\sum_{k=0}^{\mathcal{H}_j}
\sum_{l=1}^{\mathcal{H}_{j,k}}|s_{j,k}|\chi_{j,k,l}(\cdot)\right)^q\right)^{1/q}\Bigg| L_p(M)\right\|\\
&=r^{-b/p}\|f|F^s_{p,q}(M)\|,
\end{align*}
where we used the fact that $l_q\hookrightarrow l_{\infty}$  in the second  step. This finally completes the proof. 
\end{proof}

\begin{rem} Theorem~\ref{hoelder-reg} concerning the H\"older regularity on $\tilde{M}$ is a generalization of \cite[Th.~5]{Skr02}. Our proof is much simpler. In particular, here we do not rely on trace results on the orbit space $\tilde{M}/G$ and the orbits $G\cdot x$ as was done there.   
However, we wish to remark that the proof concerning traces on the orbits presented in \cite[L.~5]{Skr02}  is not  true for the full range of parameters $p\leq p_1\leq \infty$ as stated there, since it relies on the fact that for $(\tilde{s},p_1)$-atoms on the unit sphere no moment conditions are needed where $\tilde{s}=s-\frac{1}{p_1}-(n-b)(1/p-1/p_1)>0$ with $1\leq b\leq n-2$ and $s>\frac{n-b}{p}$. We provide a counterexample here. Choosing 
\[n=10, \quad p=\frac 12=p_1, \quad b=n-2=8,\]
implies  $s>\frac{n-b}{p}=4$, thus taking $s=5$ we have $\tilde{s}=s-\frac 1p=s-2=3$ and 
\[
\sigma_{p_1}^{(n-1)}-\tilde{s}=(n-1)\left(\frac{1}{p_1}-1\right)-\tilde{s}=9(2-1)-3=6>0.
\] 
But this yields 
\[
M=\max([\sigma_{p_1}^{(n-1)}-\tilde{s}],-1)=6>0,
\]
meaning that moment conditions are sometimes needed indeed for $(\tilde{s},p_1)$-atoms in this situation. 
\end{rem}

\section{An application of the radial Strauss lemma}\label{sec:5}

In this section  we will show as an application of our results, that interpolation between Sobolev  inequalities for B- and F-spaces and the generalized Strauss inequalities obtained in Theorem~\ref{thm-strauss} yields weighted Sobolev inequalities of Caffarelli-Kohn-Nirenberg type for $G$-invariant functions. \\

We start with the relevant Sobolev embeddings  needed later on. Corresponding results on $\rn$ may be found in \cite{SiTr95}. \\

\begin{lem}\label{lem-sobolev-emb} Let $(M^n,g)$ be a Riemannian manifold of bounded geometry. Moreover, let  $0<p,q\leq \infty$ ($p<\infty$ for $F$-spaces) and $\sigma_p^{(n)}\define n(\frac{1}{p}-1)_+<s<\frac np$, where $n=\dim\, M$.
\begin{itemize}
\item[(i)] Let $p< r < \infty$ and $r$ such that 
\[
s-\frac np=-\frac nr.
\]
Then   
\begin{equation*}
F^s_{p,q}(M)\hookrightarrow L_{r}(M).
\end{equation*}
\item[(ii)] Let $q\leq r < \infty$ and $r$ such that 
\[
s-\frac np=-\frac nr.
\]
Then  
\begin{equation*}
B^s_{p,q}(M)\hookrightarrow L_{r}(M).
\end{equation*}
\end{itemize}
\end{lem}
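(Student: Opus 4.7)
The plan is to reduce both embeddings to their classical Euclidean analogues (for $F$-spaces $F^s_{p,q}(\mathbb{R}^n)\hookrightarrow L_r(\mathbb{R}^n)$ and for $B$-spaces $B^s_{p,q}(\mathbb{R}^n)\hookrightarrow L_r(\mathbb{R}^n)$, cf.\ \cite{SiTr95, Tri83}), exploiting the localized definition of $F^s_{p,q}(M)$ in Definition~\ref{F-koord} and, for $B$-spaces, the real interpolation construction together with the Lions--Peetre identification of Lorentz spaces as interpolants of Lebesgue spaces.

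For part (i), I would fix an admissible trivialization $\uT=(U_\alpha,\kappa_\alpha,h_\alpha)_{\alpha\in I}$. Since the cover is uniformly locally finite with multiplicity bounded by some constant $L$, writing $f=\sum_\alpha h_\alpha f$ yields pointwise
\[|f(x)|^r\leq \Big(\sum_\alpha |h_\alpha(x)f(x)|\Big)^r\lesssim \sum_\alpha |h_\alpha(x)f(x)|^r,\]
via the power-mean inequality if $r\geq 1$ or $r$-subadditivity if $r<1$. Integrating and using that bounded geometry together with condition (B1) of Definition~\ref{bddcoord} forces $\sqrt{\det g_{ij}}$ to be bounded above and below uniformly in every admissible chart, I obtain
\[\|f\|_{L_r(M)}^r\lesssim \sum_\alpha \|(h_\alpha f)\circ \kappa_\alpha\|_{L_r(\mathbb{R}^n)}^r.\]
Applying the Euclidean Sobolev embedding to each summand (with constant independent of $\alpha$ by translation invariance on $\mathbb{R}^n$) and then the elementary nesting $\ell^p\hookrightarrow \ell^r$ for $p\leq r$ gives
\[\|f\|_{L_r(M)}\lesssim \Big(\sum_\alpha \|(h_\alpha f)\circ \kappa_\alpha\|_{F^s_{p,q}(\mathbb{R}^n)}^r\Big)^{1/r}\leq \Big(\sum_\alpha \|(h_\alpha f)\circ \kappa_\alpha\|_{F^s_{p,q}(\mathbb{R}^n)}^p\Big)^{1/p}=\|f\|_{F^s_{p,q}(M)},\]
which is exactly the claimed embedding.

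For part (ii), rather than trying to localize the $B$-norm directly (which is awkward given its interpolation definition and the lack of a localization principle for $B$-spaces), I would start from $B^s_{p,q}(M)=(F^{s_0}_{p,p}(M),F^{s_1}_{p,p}(M))_{\Theta,q}$ for some $\sigma_p^{(n)}<s_0<s<s_1<n/p$ with $s=(1-\Theta)s_0+\Theta s_1$, and let $r_i\in(p,\infty)$ be defined by $s_i-n/p=-n/r_i$. Part (i) applied at the endpoints gives $F^{s_i}_{p,p}(M)\hookrightarrow L_{r_i}(M)$, and since real interpolation preserves continuous embeddings,
\[B^s_{p,q}(M)\hookrightarrow (L_{r_0}(M),L_{r_1}(M))_{\Theta,q}=L_{r,q}(M),\]
where the last identification is the classical Lions--Peetre theorem for Lebesgue spaces on any $\sigma$-finite measure space, with $1/r=(1-\Theta)/r_0+\Theta/r_1$ matching the given relation $s-n/p=-n/r$. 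Since the hypothesis $q\leq r$ yields $L_{r,q}(M)\hookrightarrow L_r(M)$, the embedding follows.

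The main technical point I expect to verify carefully is that all constants in part (i) are truly uniform in $\alpha$: this comparability is controlled precisely by the bounds on $g$ in admissible charts coming from bounded geometry and (B1). A minor secondary issue is the case $p<1$ in part (ii), where one must check that $r_0,r_1>1$ so that the Lions--Peetre interpolation of Lebesgue spaces applies; this is guaranteed by the assumption $s_0>\sigma_p^{(n)}$, which unwinds to $r_0>1$.
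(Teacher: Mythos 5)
Your proposal is correct and follows essentially the same route as the paper: part (i) by localizing via an admissible trivialization and applying the Euclidean embedding chartwise together with $\ell^p\hookrightarrow\ell^r$, and part (ii) by real interpolation of the endpoint $F$-embeddings from part (i). The only cosmetic difference in (ii) is the order of the two final steps — you identify $(L_{r_0},L_{r_1})_{\Theta,q}$ as the Lorentz space $L_{r,q}$ and then embed it into $L_r$, whereas the paper first passes from $(\cdot,\cdot)_{\Theta,q}$ to $(\cdot,\cdot)_{\Theta,r}$ and then applies Lions--Peetre with matching secondary index; both rest on the same two facts.
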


\begin{proof} 
\begin{itemize}
\item[(i)] As for $F$-spaces, we know that corresponding  embeddings hold true on $\rn$, $F^s_{p,q}(\rn)\hookrightarrow L_{r}(\rn)$ with our restrictions on the parameters, cf. \cite[Rem.~3.3.5]{SiTr95}. This  together with the fact that our F- and $L_p$-spaces on $M$ can be  defined via localization yields the result. In particular, we have 
\[
\|f\big|L_p(M)\|=\left(\sum_{\alpha\in I}\|(h_{\alpha}f)\circ \kappa_{\alpha}\|^p_{L_p(\rn)}\right)^{1/p}. 
\]
\item[(ii)] Since $B$-spaces are defined via interpolation of $F$-spaces, see Definition~\ref{F-koord}, let $s_0,s_1\in \real$ with  $s_0<s_1$,  $\sigma_p^{(n)}<s_i<\frac np$,  and  
\[
s_i-\frac np=-\frac {n}{r_i}\quad \text{for} \quad i=0,1. 
\]
Then $\sigma_p^{(n)}<s=(1-\Theta)s_0+\Theta s_1<\frac np$ and  
we see that 
\begin{align*}
B^s_{p,q}(M)&=\left(F^{s_0}_{p,p}(M),F^{s_1}_{p,p}(M)\right)_{\Theta,q}\\
&\hookrightarrow  \left(F^{s_0}_{p,p}(M),F^{s_1}_{p,p}(M)\right)_{\Theta,r}\\
&\hookrightarrow \left(L_{r_0}(M),L_{r_1}(M)\right)_{\Theta,r}=L_{r}(M),
\end{align*}
where 
$\frac{1}{r}=\frac{1-\Theta}{r_0}+\frac{\Theta}{r_1}=\frac 1p-\frac sn$. The first embedding is a property of the real interpolation method, i.e., it holds $(\cdot, \cdot)_{\Theta,q}\hookrightarrow (\cdot, \cdot)_{\Theta,r}$ if $q\leq r$, cf. \cite[Sect. 1.3.3]{Tri78}. This completes the proof. 
\end{itemize}
\end{proof}

\begin{rem}
 Compared to \cite[Rem.~3.3.5]{SiTr95}, we adapted the assumptions on the parameters in Lemma \ref{lem-sobolev-emb} according to our needs for what follows next. Instead of assuming 
$\sigma_p^{(n)}\define n(\frac{1}{p}-1)_+<s<\frac np$ we could alternatively use $1\leq r<\infty$ and $s>0$, which together with the relation $s-\frac np=-\frac nr$ implies the same restrictions on $s$ as in Theorem \ref{lem-sobolev-emb} above. 
\end{rem}

Now we are in a position to formulate some weighted Sobolev inequalities of Caffarelli-Kohn-Nirenberg-type. 

\begin{thm}\label{thm:CKN-ineq}  Let $(M^n,g)$ be a Riemannian manifold and let $G$ be a compact subgroup of  $\mathrm{Isom}(M,g)$. We assume that the $G$-action on $M$ fulfills Assumptions~\eqref{est-Hjk} and~\eqref{est-Pjk}. Moreover, put  $\tilde{n}\define \dim (\tilde{M}/G)$ and let   $1\leq p, \tilde{q}\leq \infty$  and 
\[\max\left(\sigma_p^{(n)}, \frac{\tilde{n}}{p}\right)<s<\frac np, \qquad \text{where}\qquad  \sigma_p^{(n)}=n\left(\frac 1p -1 \right)_+.\]
Furthermore, let $ p<r<q\leq \infty$ and $r$ such that 
\[
s-\frac np =-\frac nr.
\]
Then  for functions $f$ with $\supp f\subset \tilde{M}$  we have 
\begin{equation*}
\|w^{\left(1-\frac rq\right)}f|L_q(M)\|\leq c \|f|A^{s}_{p,\tilde{q}}(M)\|,
\end{equation*}
where $A\in \{B,F\}$ and $w(x)=\vol_{n-\tilde{n}} (G\cdot x)^{\frac 1p}$ if $x\in \tilde{M}$.  For $A=B$ we additionally require $\tilde{q}\leq r$.
\end{thm}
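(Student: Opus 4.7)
The plan is to combine the pointwise decay bound from Theorem~\ref{thm-strauss} with the standard Sobolev embedding from Lemma~\ref{lem-sobolev-emb} via a simple Hölder-type splitting of the integrand. Since $\max(\sigma_p^{(n)},\tilde n/p)<s<n/p$ and $s-n/p=-n/r$, the parameters lie in the admissible range for both ingredients (for $B$-spaces we use the extra assumption $\tilde q\leq r$ in order to apply Lemma~\ref{lem-sobolev-emb}(ii); for $F$-spaces the finite-$p$ condition matches Theorem~\ref{thm-strauss}(ii)).

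First I would invoke Theorem~\ref{thm-strauss}, since $s>\tilde n/p$, to obtain the pointwise estimate
\begin{equation*}
w(x)\,|f(x)|\;\leq\;C\,\|f|A^{s}_{p,\tilde q}(M)\|,\qquad x\in\tilde M,
\end{equation*}
where $w(x)=\vol_{n-\tilde n}(G\cdot x)^{1/p}$. Second, Lemma~\ref{lem-sobolev-emb} gives
\begin{equation*}
\|f|L_r(M)\|\;\leq\;C\,\|f|A^{s}_{p,\tilde q}(M)\|.
\end{equation*}

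For $q<\infty$, using $q-r>0$ and the identity $q(1-r/q)=q-r$, I would write
\begin{equation*}
w(x)^{q-r}|f(x)|^q\;=\;\bigl(w(x)|f(x)|\bigr)^{q-r}\,|f(x)|^r,
\end{equation*}
apply the pointwise bound to the first factor, and then integrate:
\begin{equation*}
\int_M w^{q-r}|f|^q\,\vo_g\;\leq\;\bigl(C\|f|A^{s}_{p,\tilde q}(M)\|\bigr)^{q-r}\int_M|f|^r\,\vo_g\;\leq\;C\,\|f|A^{s}_{p,\tilde q}(M)\|^{q}.
\end{equation*}
Taking $q$-th roots yields exactly the claimed inequality. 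The support assumption $\supp f\subset\tilde M$ ensures that $|f|$ vanishes at points where the weight $w$ degenerates, so the estimate can be carried out on $\tilde M$ where the pointwise bound is available. The case $q=\infty$ is even more direct: then $1-r/q=1$, and the inequality $\|wf|L_\infty(M)\|\leq C\|f|A^{s}_{p,\tilde q}(M)\|$ is precisely the statement of Theorem~\ref{thm-strauss}.

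I do not anticipate a genuine obstacle in this argument; the entire content lies in having already established the strong pointwise decay estimate of Theorem~\ref{thm-strauss}. The only mild point of care is matching the parameter ranges and verifying that the hypotheses of both Theorem~\ref{thm-strauss} and Lemma~\ref{lem-sobolev-emb} are simultaneously satisfied under the blanket assumption $\max(\sigma_p^{(n)},\tilde n/p)<s<n/p$ together with $p<r<q\leq\infty$ and (for $B$-spaces) $\tilde q\leq r$.
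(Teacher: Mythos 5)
Your proposal is correct, and it uses the same two key ingredients as the paper -- the pointwise Strauss estimate of Theorem~\ref{thm-strauss} (equivalently $\|wf|L_\infty(M)\|\lesssim\|f|A^s_{p,\tilde q}(M)\|$) and the Sobolev embedding of Lemma~\ref{lem-sobolev-emb} -- but it combines them differently. The paper feeds the two endpoint inequalities into the real interpolation method and then invokes Freitag's identification $\left(L_r(M),L_\infty(w,M)\right)_{\Theta,q}=L_q(w^{\Theta},M)$ with $\Theta=1-\tfrac rq$, which requires checking that $w=\vol_{n-\tilde n}(G\cdot x)^{1/p}$ is an admissible weight (finite and nonzero a.e.). You instead prove the needed instance of that interpolation inequality by hand via the pointwise splitting $w^{q-r}|f|^q=(w|f|)^{q-r}|f|^r$, which immediately gives $\|w^{1-r/q}f|L_q(M)\|\leq\|wf|L_\infty(M)\|^{1-r/q}\,\|f|L_r(M)\|^{r/q}$. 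This is more elementary and self-contained (no citation of the weighted interpolation theorem, no weight admissibility to verify, and the measure-zero set $M\setminus\tilde M$ causes no trouble in the integral), at the cost of being tailored to the specific endpoint pair $(L_r,L_\infty(w))$ rather than exhibiting the result as an instance of a general interpolation scheme. Your parameter bookkeeping (using $\tilde q\leq r$ only for the $B$-case of Lemma~\ref{lem-sobolev-emb}, and reducing $q=\infty$ to Theorem~\ref{thm-strauss} itself) matches the paper's.
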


\begin{proof}
We can rewrite the Strauss estimates from Theorem~\ref{thm-strauss} as 
\begin{equation}\label{strauss-1}
\| f|L_{\infty}(w,M)\|\leq c \|f|A^s_{p,\tilde{q}}(M)\|,\qquad A\in \{B,F\},
\end{equation}
where by $L_s(w,M)$, $0<s\leq \infty$,  we denote the weighted Lebesgue spaces  $L_s(w,M)\define \{ f\ |\ wf\in L_s(M)\}$ with nonnegative measurable weight function $w$. Now real interpolation $(\cdot,\cdot)_{\Theta,q}$ of  estimate \eqref{strauss-1}  with the Sobolev inequalities  
\[
\|f|L_r(M)\|\leq c \|f|A^s_{p,\tilde{q}}(M)\| 
\]
from Lemma \ref{lem-sobolev-emb} yields the spaces $A^s_{p,\tilde{q}}(M)$ on the right hand side and the weighted spaces $\left(L_r(M),L_{\infty}(w,M)\right)_{\Theta,q}$  on the left hand side, where $\frac 1q=\frac{1-\Theta}{r}+\frac{\Theta}{\infty}=\frac{1-\Theta}{r}$ and therefore $\Theta =1-\frac rq$. We use results on  real interpolation for weighted $L_p$-spaces from  \cite{Frei78} and  obtain
\[
\left(L_r(w_0\define \text{id},M),L_\infty(w_1\define w ,M)\right)_{\Theta,q}
=L_q(w_1^{\Theta},M), 
\]
where 
\[w_1^{\Theta}=w^{\left(1-\frac rq\right)}=\vol_{n-\tilde{n}} (G\cdot x)^{\frac 1p\left(1-\frac rq\right)}.\]  We point out that the constants involved in the corresponding norms are independent of the weights $w_0$ and $w_1$. Furthermore,  $w_1=\vol_{n-\tilde{n}} (G\cdot x)^{1/p}$  is a suitable weight  satisfying the desired properties from  \cite{Frei78} (measurable function with values in $[0,\infty]$ and $\mu(x\in M: \vol_{n-\tilde{n}} (G\cdot x)^{1/p}=0\cup \vol_{n-\tilde{n}} (G\cdot x)^{1/p}=\infty)=0$, the latter being true because  $G\cdot x$ is compact since $G$ is compact and for $x\in \tilde{M}$ we always have $\vol_{n-\tilde{n}} (G\cdot x)\neq 0$). This leads to the desired inequality of Caffarelli-Kohn-Nirenberg type,
\[
\|\vol_{n-\tilde{n}} (G\cdot x)^{\frac 1p\left(1-\frac rq\right)}f|L_q(M)\|\leq c \|f|A^s_{p,\tilde{q}}(M)\|.
\]
\end{proof}

\appendix
\section{On a source of examples}\label{app}

The aim of this appendix is to provide a class of examples of manifolds $(M,g)$ of bounded geometry, see Definition~\ref{defi_bdgeom}, that satisfy  the Assumptions~\eqref{est-Hjk}--\eqref{est-Pjk}. More precisely we will provide examples that fullfill:

\begin{assumption} \label{ass:G2} There is a $G$-adapted covering such that the following holds. 
 \begin{itemize}
  \item[(i)] For all $x\in B(x_{j,k,\ell}, 2^{-j+1})$ we have 
  \begin{align}\label{est-Hjk2} \mathcal{H}_{j,k} \gtrsim \text{vol}_{n-\tilde{n}}(G\cdot x) 2^{j(n-\tilde{n})}\end{align}
  and
  \begin{align}\label{est-Hjk2b} \mathcal{H}_{j,k} \sim \text{vol}_{n-\tilde{n}}(G\cdot x_{j,k,\ell}) 2^{j(n-\tilde{n})},\end{align}  
 where $\tilde{n}$ denotes the dimension of the orbit space. 
  \item[(ii)] There are constants $b>0$ and $\epsilon>0$  such that for all $x\in M$ with $\text{dist}_g(x, M\setminus \tilde{M})\geq r$ it holds 
  \begin{align} \label{est-volGx2} \text{vol}_{n-\tilde{n}}(G\cdot x) \gtrsim \min\{ r^b, \epsilon\}.  \end{align}
 \end{itemize}
where the constants do not depend on $j$, $k$, $\ell$, $x$ and $r$.
\end{assumption}

\begin{rem}\label{rem:ass}
\begin{enumerate}[(i)]
\item Note that \eqref{est-volGx2} and \eqref{est-Hjk2} are simply the same as \eqref{est-volGx} and  \eqref{est-Hjk}, respectively. In order to obtain \eqref{est-Pjk} we note that the considerations above Assumption~\ref{ass:G} imply that for an $x_{j,k_j}$ not just one $x_{j+1,k_{j+1}}$ with $\text{dist}_{M/G} (x_{j,k_j}, x_{j+1,k_{j+1}})\leq 2^{-j}$ exists. There is also another $x_{j+1,\tilde{k}_{j+1}}$ with the same property that additionally satisfies $\text{dist}_{M/G} (x_{j+1,\tilde{k}_{j+1}}, x_{j+1,k_{j+1}})\geq 2^{-j+1}$. This implies 
\begin{align*}\ \ \vol(P_{j,k_j}\setminus P_{j+1, k_{j+1}})&\geq \vol (P_{j+1, \tilde{k}_{j+1}})\geq \mathcal{H}_{j+1,\tilde{k}_{j+1}} \inf_{\ell} B(x_{j+1,\tilde{k}_{j+1},\ell}, 2^{-j-1}) \\ & \gtrsim 2^{-jn} \mathcal{H}_{j+1,\tilde{k}_{j+1}},\end{align*} where the last estimate follows since by the bounded geometry of $(M,g)$ the volume of balls is uniformly comparable to the volume of Euclidean balls. Now \eqref{est-Hjk2} and \eqref{est-Hjk2b} imply $\mathcal{H}_{j+1,\tilde{k}_{j+1}}\gtrsim \mathcal{H}_{j,k_j}$ and we obtain \eqref{est-Pjk}. Thus, in total we have seen that Assumption~\ref{ass:G2} implies  Assumption~\ref{ass:G}. 
\item In the special case that $M$ only consists of the principal stratum, \eqref{est-volGx2} is just a lower bound on the volume of the orbits (Here we used the convention that $\text{dist}_g(x, \varnothing)=0$). In this case, Assumption~\ref{ass:G2} is already satified if the manifold is a foliation of bounded geometry, cf. \cite{AKL14} and also compare Remark~\ref{rem:folbdd} below.
\end{enumerate}
\end{rem}

The special case above shows that if one works with manifolds that are  a foliation of bounded geometry in the sense of Definition~\ref{Gbbgeo} away from the singular strata, Assumption \ref{ass:G2} is satified away from the strata. Thus, our main concern will be  the geometry near the singular strata. Before providing our class of examples we shortly recall basic properties of the stratification induced by the group action.

\subsection{Manifolds with symmetries}

The action of $G$ on $M$ is denoted by the map $(g,x)\in G\times M\mapsto g\cdot x\in M$ where 
$G\cdot x= \{ g\cdot x\, |\, g\in G\}$ is the orbit through $x$ and $S_G(x)= \{ g\in G\, |\, g\cdot x=x\}$ is the stabilizer of $x$.

If $y=g\cdot x$, then  $S_G(y)=g\cdot S_G(x)\cdot g^{-1}$.
The \emph{orbit type} of $x\in M$ is defined to be the conjugacy class $(S_G(x))\define \{  g\cdot S_G(x)\cdot g^{-1}\ |\ g\in G\}$. Thus, $(S_G(x))=(S_G(g\cdot x))$ for all $x\in M$ and $g\in G$. Inclusion defines a partial ordering on the set of all conjugacy classes. The minimal element of this partial ordering is unique and called the \emph{principal orbit type} $(P)$.  For a conjugacy class $(H)$ we set
 \[ M_{(H)}\define \{x\in M\ |\ S_G(x)\in (H)\} \]
 and call this set the \emph{$(H)$-stratum of $M$}. In particular, we call $\tilde{M}\define M_{(P)}$ the \emph{principal stratum}. Furthermore, we let $\pi\colon M\to M/G$ be the canonical projection and put $\hat{M}_{(H)}\define \pi(M_{(H)})$.

In the next theorem we collect several well-known facts:

\begin{thm}\cite{Bre72} Let $(M,g)$ be a Riemannian manifold, and let $G$ be a compact subgroup of  $\mathrm{Isom}(M,g)$. Then
\begin{itemize}
 \item[(i)] The principal stratum $\tilde{M}$ is a dense and open subset of $M$ and $\tilde{M}/G$ is a smooth manifold such that $\pi|_{\tilde{M}}$ is a submersion.
 \item[(ii)] $M/G$ is Hausdorff and $\pi\colon M\to M/G$ is proper. 
 \item[(iii)] $M_{(H)}$ is a submanifold for every $(H)$.
\end{itemize}
\end{thm}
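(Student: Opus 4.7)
The plan is to derive all three parts from the \emph{equivariant slice theorem}, which provides a linearized local model for the $G$-action near any orbit. First I would establish the slice theorem. Fix $x\in M$. Since $G$ is compact and acts continuously, the orbit $G\cdot x$ is a compact embedded submanifold. Because $G$ acts by isometries, the exponential map $\exp^M_x$ is $S_G(x)$-equivariant; restricting it to a sufficiently small ball $N_x$ in the normal space $(T_x(G\cdot x))^{\perp}\subset T_xM$ yields a slice $\Sigma_x=\exp^M_x(N_x)$, i.e., a $S_G(x)$-invariant embedded submanifold through $x$ that meets $G\cdot x$ transversally only at $x$. Shrinking $N_x$ if necessary, the twisted product map
\[
\Phi\colon G\times_{S_G(x)}\Sigma_x\to M,\qquad [g,y]\mapsto g\cdot y,
\]
is an equivariant diffeomorphism onto an open tubular neighborhood of $G\cdot x$.

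For part (iii), I would argue locally via the slice. In the above model a point $[g,y]$ has stabilizer $g\,S_G(y)\,g^{-1}$ with $S_G(y)\subset S_G(x)$, so $M_{(H)}$ pulls back under $\Phi$ to $G\times_{S_G(x)}\Sigma_x^H$, where $\Sigma_x^H\subset\Sigma_x$ is the set of points whose $S_G(x)$-stabilizer lies in the conjugacy class $(H)$. A standard fact about compact linear group actions (fixed-point sets of closed subgroups are linear subspaces) together with linearization of the isotropy representation on $N_x$ implies that $\Sigma_x^H$ is a smooth submanifold, and hence so is $M_{(H)}$.

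Part (i) then follows by combining an induction on $\dim M$ with the slice model. In each slice the complement of the locus with smallest stabilizer has strictly smaller dimension, so the principal orbit type exists and its locus is open and dense. On $\tilde M$, the slice $\Sigma_x$ consists entirely of points of the same orbit type as $x$; hence after further shrinking, $S_G(x)$ acts trivially on $\Sigma_x$ and the slice model identifies $\pi|_{\tilde M}$ locally with the projection $G/S_G(x)\times\Sigma_x\to\Sigma_x$, which is a smooth submersion. This endows $\tilde M/G$ with a canonical smooth manifold structure such that $\pi|_{\tilde M}$ is a submersion. For part (ii), properness of $\pi$ follows by covering the preimage in $M$ of a compact subset of $M/G$ by finitely many tubes $\Phi(G\times_{S_G(x_i)}\Sigma_{x_i})$ and using compactness of $G$; the Hausdorff property follows since distinct orbits are disjoint compact subsets of $M$ and can hence be separated by disjoint $G$-invariant open neighborhoods (again constructed from the slice model), which descend to disjoint open neighborhoods in $M/G$.

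The main obstacle in a complete treatment is the slice theorem itself: one must verify that $\exp^M_x$ restricted to the normal ball really is an $S_G(x)$-equivariant embedding, that the slice radius can be chosen small enough that the stabilizer of every point of $\Sigma_x$ is a subgroup of $S_G(x)$, and that $\Phi$ is a diffeomorphism onto an open set (and not merely a local diffeomorphism). Once this technical step is in place, parts (i)--(iii) follow fairly mechanically from the linearized local model.
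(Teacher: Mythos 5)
The paper does not actually prove this theorem: it is stated as a collection of well-known facts with a citation to Bredon's book, and Bredon's proofs proceed exactly along the route you outline --- the equivariant slice/tubular-neighbourhood theorem, the principal orbit type theorem by induction over slices, the description of $M_{(H)}$ inside a tube via fixed-point subspaces of the slice representation, and compactness of $G$ for the point-set assertions --- so your proposal is essentially the standard proof of the cited result. The only two spots that need a bit more care in a complete write-up are the conjugacy bookkeeping in (iii) (the relevant subset of the slice is a union, over the finitely many subgroups $H'\subset S_G(x)$ lying in the $G$-conjugacy class $(H)$, of open subsets of the linear fixed-point sets $N_x^{H'}$, and one must check these pieces assemble to a submanifold) and properness in (ii), where the open tubes should be replaced by compact neighbourhoods $C_i$ so that $\pi^{-1}(K)$ is exhibited as a closed subset of the compact set $G\cdot\bigcup_i C_i$.
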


\subsection{Manifolds with group action and adapted bounded geometry}

Next we propose a definition for the notion of bounded geometry that is adapted to the group action. Up to this point  we simply collected assumptions that imply what was needed for our purposes. In view of further investigations, e.g. when looking for 'G-adapted' coordinates that will provide admissible trivializations in the sense of \cite[Def. 3.7]{GS13},  more assumptions on the geometry near the singular strata have to be expected.  But for the purpose of this paper the following is sufficient:

\begin{defi}\label{Gbbgeo}
  Let $(M,g)$ be a Riemannian manifold with $G$ being a compact subgroup of  $\mathrm{Isom}(M,g)$. 
 We say that $(M, g, G)$ has \emph{$G$-adapted bounded geometry} if  
 \begin{enumerate}[(i)]
\item $(M,g)$ has bounded geometry.
 \item There is an $r_0>0$ such that for the $G$-action on $M_{r_0}\define M\setminus U_{r_0}(M\setminus \tilde{M})$ where $U_{r_0}(M\setminus \tilde{M})=\{x\in M\ |\ \text{dist}(x, M\setminus \tilde{M})<r_0\}$ we have:
\begin{enumerate}[(a)]
\item The injectivity radii of the orbits in $M_{r_0}$ are uniformly bounded from below by a positive $\tilde{\epsilon}$.
\item  There is an $\epsilon>0$ such that there is a collar around each orbit in $M_{r_0}$ (a tubular neighbourhood), i.e., 
for all $x\in M_{r_0}$ and $g_1, g_2\in G$ with $g_1\neq g_2$ the normal balls $B_\epsilon^\perp(g_1\cdot x)$ and $B_\epsilon^\perp(g_2\cdot x)$ are disjoint, where
\begin{align*}B_\epsilon^\perp(x) \define \left\{ \right.&z\in M\ |\ \text{dist}_M (x, z) < \epsilon,\\ 
&\left.\exists \delta_0 >0\, \forall \delta<\delta_0: \text{dist}_M (x, z) = \text{dist}_{M} (B_\delta^{G\cdot x} (x), z)\right\}
\end{align*}
with $B_\delta^{G\cdot x} (x) = \{u \in G\cdot x\ |\ \text{dist}_{G\cdot x} (u, x) < \delta \}$ and $\text{dist}_M$ and $\text{dist}_{G\cdot x}$ denote the induced distance functions on $M$ and $G\cdot x$, respectively.
\item The O'Neill tensors (see Remark \ref{rem:folbdd} below) of the foliation $M_{r_0}\to M_{r_0}/G$ are totally bounded, i.e., $\Vert \nabla^kT\Vert_{L^\infty(M_{r_0})}<\infty$ and $\Vert \nabla^kA\Vert_{L^\infty(M_{r_0})}<\infty$.
\end{enumerate}
\item For each orbit type $(H)$ there is a constant $C_{(H)}$ such that for all $x\in M_{(H)}$ and every geodesic $\gamma\colon (0,2r_0)\to \tilde{M}/G$  with $\lim_{t\searrow 0}\gamma(t)=x$ and $(\lim_{t\searrow 0}\dot\gamma(t))\perp M_{(H)}$ such that $\inf_{r\in (0,2r_0)} r^{-b_{(H)}}\vol_{n-\tilde{n}} (G\cdot \gamma(r))\geq C_{(H)}$.
\item For all $r\in (0,r_0)$, $r'<r/2$ it holds 
\[ U_ {r'}(G\cdot x)\sim \vol_{n-\tilde{n}} (G\cdot x) (r')^{\tilde{n}}\]
for all $x$ with $0<\text{dist} (x, M\setminus \tilde{M})=r<r_0$.
\end{enumerate}
\end{defi}

 \begin{rem}\label{rem:folbdd} Let $M=\tilde{M}$, i.e., we only have the principal stratum. Then the above definition is simply the definition of a Riemannian foliation with bounded geometry, see \cite[Def. 8.1]{AKL14}, where   the leaves are the orbits.  Moreover, from \cite[Prop. 8.6]{AKL14} we deduce that 
  in this case (ii) implies (i).
  
  The orbits (or in general the leaves) induce an orthogonal decomposition of each tangent space $T_xM$ in a vertical subspace, tangent to the orbit $G\cdot x$, and the horizontal subspace. The projection of a vector field $X$ on $M$ onto its vertical resp. horizontal part is denoted by $V(X)$ resp. $H(X)$. The O'Neill tensors, cf. \cite{ON66},  $T$ and $A$ are $(1,2)$ tensors with
  \begin{align*} T_XY&=H(\nabla_{V(X)} V(Y))+V(\nabla_{V(X)} H(Y))\quad \text{and}\\
   A_XY&=H(\nabla_{H(X)} V(Y))+V(\nabla_{H(X)} H(Y)).
   \end{align*}
 \end{rem}

\begin{rem}[$G$-adapted discretization]\label{rem:gd}
Let $(M,g)$ be a Riemannian manifold with $G$-adapted covering. We choose for any $j\in \mN$ a $G$-adapted $(2^{-j},2)$ discretization  as in Subsection~\ref{ssec:op} under the additional condition that the starting $(2^{-j},1)$-discretization $\{G\cdot x_{j,k,\ell}\}$ of $\tilde{M}$ is such that $\text{dist} (x_{j,k,\ell}, M\setminus \tilde{M})\geq 2^{-j-1}$. By triangle inequality this is always possible. We need this condition on the distance to the singular set since otherwise \eqref{est-Hjk2b} cannot be achieved ($\mathcal{H}_{j,k}\geq 1$ while $\vol_{n-\tilde{n}} (G\cdot x_{j,k, \ell})$ could converge to zero for $j\to \infty$).
\end{rem}

\begin{lem}
 Let $(M,g)$ be a Riemannian manifold with $G$-adapted bounded geometry. Let $\{x_{j,k,\ell}\}$ be a $G$-adapted covering as in the last remark. Then Assumptions~\ref{ass:G2} and \ref{ass:G} are fulfilled.
\end{lem}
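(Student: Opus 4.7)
By Remark~\ref{rem:ass}(i), Assumption~\ref{ass:G2} implies Assumption~\ref{ass:G}, so it suffices to verify the three conditions \eqref{est-Hjk2}, \eqref{est-Hjk2b}, \eqref{est-volGx2}. I will treat \eqref{est-volGx2} first, then use the geometric information of $G$-adapted bounded geometry together with the distance condition $\text{dist}_g(x_{j,k,\ell},M\setminus\tilde{M})\geq 2^{-j-1}$ from Remark~\ref{rem:gd} to obtain \eqref{est-Hjk2b} and \eqref{est-Hjk2}.

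For \eqref{est-volGx2}, take $x\in M$ with $r\define\text{dist}_g(x,M\setminus\tilde{M})>0$ and pick a closest point $y\in M\setminus\tilde{M}$, lying in some non-principal stratum $M_{(H)}$. By the first variation formula a minimizing geodesic $\gamma$ from $y$ to $x$ hits $M_{(H)}$ orthogonally at $y$, and by reparametrisation we may view $\gamma\colon (0,r]\to \tilde{M}$ as a geodesic with $\gamma(r)=x$ falling into the scope of Definition~\ref{Gbbgeo}(iii) whenever $r<2r_0$. This gives $\vol_{n-\tilde{n}}(G\cdot x)\geq C_{(H)} r^{b_{(H)}}$. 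Setting $b\define \max_{(H)\neq(P)} b_{(H)}$ (finite, as the  orbit types appearing in a $G$-manifold with $G$-adapted bounded geometry are controlled), this handles small $r$. For $r\geq r_0$ we have $x\in M_{r_0}$, and Definition~\ref{Gbbgeo}(ii)(a),(b) together with compactness of $G$ provide a uniform lower bound $\vol_{n-\tilde{n}}(G\cdot x)\geq \epsilon$; combining the two cases yields \eqref{est-volGx2}.

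For \eqref{est-Hjk2b}, recall that $\{x_{j,k,\ell}\}_\ell$ is, by construction, a $(2^{-j},1)$-discretization of the orbit $G\cdot x_{j,k}$ measured with the ambient distance. Since $\text{dist}_g(x_{j,k,\ell}, M\setminus\tilde{M})\geq 2^{-j-1}$, the orbit $G\cdot x_{j,k,\ell}$ sits at controlled distance from the singular strata, and Definition~\ref{Gbbgeo}(ii)(b) ensures that in the tubular neighbourhood of this orbit the ambient and intrinsic distances are comparable on the relevant scale $2^{-j}$; thus $\{x_{j,k,\ell}\}_\ell$ is also, up to fixed multiplicative constants, an intrinsic $(2^{-j},c)$-discretization of $G\cdot x_{j,k,\ell}$. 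Part (ii)(a),(c) of Definition~\ref{Gbbgeo} guarantees uniform bounded geometry of the orbit at scale $2^{-j}$ (injectivity radius $\gtrsim 2^{-j}$ and bounded second fundamental form via the O'Neill tensors), so standard volume comparison on the $(n-\tilde n)$-dimensional orbit gives $\vol(B^{G\cdot x_{j,k,\ell}}(x_{j,k,\ell},2^{-j}))\sim 2^{-j(n-\tilde{n})}$; counting how many such balls are needed to cover the orbit yields $\mathcal{H}_{j,k}\sim\vol_{n-\tilde{n}}(G\cdot x_{j,k,\ell})\,2^{j(n-\tilde{n})}$.

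Finally, for \eqref{est-Hjk2} we need the upper bound $\vol_{n-\tilde{n}}(G\cdot x)\lesssim \vol_{n-\tilde{n}}(G\cdot x_{j,k,\ell})$ for all $x\in B(x_{j,k,\ell},2^{-j+1})$; combined with \eqref{est-Hjk2b} this yields \eqref{est-Hjk2}. For $x$ in the principal stratum (where the slice theorem gives an equivariant product description of a tubular neighbourhood of $G\cdot x_{j,k,\ell}$), Definition~\ref{Gbbgeo}(iv) relates the volume of tubes around orbits to the volume of the orbit itself and forces $\vol_{n-\tilde n}(G\cdot x)\sim \vol_{n-\tilde n}(G\cdot x_{j,k,\ell})$ because the tube $U_{2^{-j+1}}(G\cdot x_{j,k,\ell})$ contains the orbit $G\cdot x$. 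For $x$ lying in a lower stratum, orbit volume can only drop when approaching the singular set; the triangle inequality $\text{dist}_g(x,M\setminus\tilde{M})\leq \text{dist}_g(x_{j,k,\ell},M\setminus\tilde{M})+2^{-j+1}\lesssim 2^{-j}$ together with the upper bound version of Definition~\ref{Gbbgeo}(iii) (a volume comparison argument along perpendicular geodesics, using the bounded O'Neill tensors to prevent blow-up) gives the required inequality. I expect the main technical obstacle to be the uniformity across all orbit types of these orbit-volume comparisons near the singular strata, which is precisely the content of Definition~\ref{Gbbgeo}(iii) and (iv) and is what makes the notion of $G$-adapted bounded geometry do the job.
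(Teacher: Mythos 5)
Your overall strategy coincides with the paper's: verify Assumption~\ref{ass:G2} and invoke Remark~\ref{rem:ass}(i), prove \eqref{est-volGx2} by combining the uniform lower bound on orbit volumes in $M_{r_0}$ (from Definition~\ref{Gbbgeo}(ii)) with condition (iii) and $b=\max_{(H)}b_{(H)}$ near the singular strata, and obtain the $\mathcal{H}_{j,k}$ estimates from a comparison between the number of covering balls and the volume of a tube around the orbit. That part of your plan is fine and matches the paper.

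There is, however, a concrete gap in your verification of \eqref{est-Hjk2b}. You base it on Definition~\ref{Gbbgeo}(ii)(a)--(c) (intrinsic injectivity radius of the orbit $\gtrsim 2^{-j}$, comparability of ambient and intrinsic distances via the collars, bounded O'Neill tensors), but all of (ii) is only assumed on $M_{r_0}=M\setminus U_{r_0}(M\setminus\tilde M)$, whereas the points $x_{j,k,\ell}$ chosen as in Remark~\ref{rem:gd} are only guaranteed to satisfy $\mathrm{dist}(x_{j,k,\ell},M\setminus\tilde M)\geq 2^{-j-1}$; for large $j$ they lie outside $M_{r_0}$, exactly in the region where the orbits may collapse and where no intrinsic bounded geometry of the orbits at scale $2^{-j}$ is postulated. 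The paper avoids this by never discretizing the orbit intrinsically: it observes that the balls $B(x_{j,k,\ell},2^{-j+1})$ cover the tube $U_{2^{-j}}(G\cdot x_{j,k,1})$ while the balls $B(x_{j,k,\ell},2^{-j-1})$ are disjoint inside it, so bounded geometry of $(M,g)$ alone gives $2^{-jn}\mathcal{H}_{j,k}\sim\vol\bigl(U_{2^{-j}}(G\cdot x_{j,k,\ell})\bigr)$; the tube volume is then converted into $\vol_{n-\tilde n}(G\cdot x_{j,k,\ell})\,2^{-j\tilde n}$ using (ii)(b) on $M_{r_0}$ and, crucially, condition (iv) near the strata, where the choice $\mathrm{dist}(x_{j,k,\ell},M\setminus\tilde M)\geq 2^{-j-1}$ makes the radius $2^{-j-2}$ admissible in (iv). You do bring in condition (iv), but only for the comparison $\vol_{n-\tilde n}(G\cdot x)\lesssim\vol_{n-\tilde n}(G\cdot x_{j,k,\ell})$ in \eqref{est-Hjk2}; to close the argument you should also use it (in place of (ii)) to establish \eqref{est-Hjk2b} for the levels $j$ with $2^{-j}\ll r_0$. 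A minor additional point: (iv) only applies to tubes of radius $r'<r/2$, so your use of the tube $U_{2^{-j+1}}(G\cdot x_{j,k,\ell})$ exceeds the admissible radius $2^{-j-2}$ and should be shrunk accordingly.
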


\begin{proof}W.l.o.g. we assume that $\min\{ r_0, \epsilon, \tilde{\epsilon}\}=1$ holds for the constants in Definition~\ref{Gbbgeo} (this is just so  we can stick to $(2^{-j},\alpha=1)$-discretizations in Subsection~\ref{ssec:op} instead of adjusting the $\alpha$).  Condition (ii)(a) of Definition~\ref{Gbbgeo} implies that ${\inf_{x\in M_{r_0}}} \vol (G\cdot x)$ is positive and, hence, gives \eqref{est-volGx2} for all $x\in M_{r_0}$.
Condition (iii) of Definition~\ref{Gbbgeo} implies \eqref{est-volGx2} on $M\setminus M_{r_0}$ (and together with the above on all of $M$), since there are only finitely many orbit types and we can choose $b\define {\max}_{(H)} b_{(H)}$. 

Condition (ii)(b) in Definition~\ref{Gbbgeo} gives positive constants $C_i$ with 
\[C_1 \leq \vol (U_{2^{-j}}(G\cdot x)) (2^{-j\tilde{n}}\vol_{n-\tilde{n}}(G\cdot x))^{-1} \leq C_2.\]
Since the balls $B(x_{j,k,\ell}, 2^{-j+1})$ cover $U_{2^{-j}}(G\cdot x_{j,k,1})$ and the balls $B(x_{j,k,{\ell}}, 2^{-j-1})\subset U_{2^{-j}}(G\cdot x_{j,k,1})$ are disjoint, the bounded geometry of $(M,g)$ implies $2^{-jn} \mathcal{H}_{j,k}\sim \vol (U_{2^{-j}}(G\cdot x_{{j,k,\ell}}))$. Thus, we have \eqref{est-Hjk2b} for all $x_{j,k,\ell} \in M_{r_0}$ and by triangle inequality \eqref{est-Hjk2} for all $x\in M_{r_0}$. Condition (iv) from Definition ~\ref{Gbbgeo} gives 
together with the choice of $x_{j,k,\ell}$ as in Remark~\ref{rem:gd} that $\mathrm{vol}(U_{2^{-j-2}}(G\cdot x_{j,k,\ell}))\sim \text{vol}_{n-\tilde{n}} (G\cdot x_{j,k,\ell}) 2^{-j\tilde{n}}$. Then an analog argument as above implies the rest of Assumption~\ref{ass:G2} and by Remark~\ref{rem:ass} also Assumption~\ref{ass:G}.
\end{proof}

\begin{ex}
Consider  $SO(n)$ on a warped product $(\mS^{n-1}\times \mR_{\geq 0}, f^2(r)\sigma_{n-1}+ \d r^2)$. 
  Then for $x$ with $r=|x|>0$ we have $\vol_{n-\tilde{n}} (G\cdot x)= f(r)^{n-1}\vol (\mS^{n-1})$. In the particular case of the Euclidean space we have  $f(r)=r$ and  recover the Strauss lemma. For the hyperbolic space we have  $f(r)=\sinh r$ and  obtain an exponential decay.
\end{ex}

\bibliographystyle{alpha}

 \def\cprime{$'$}

\vfill

{\small
\begin{minipage}[t]{0.45\textwidth}
\noindent
Nadine Gro\ss{}e\\
Mathematical Institute\\  
 University of  Freiburg \\
 Eckerstr. 1\\ 
 79104 Freiburg \\
 Germany\\[1ex]
\texttt{nadine.grosse@math.uni-freiburg.de}
\end{minipage}\hfill 
\begin{minipage}[t]{0.45\textwidth}
\noindent
Cornelia Schneider\\
Applied Mathematics III\\
University of Erlangen--Nuremberg\\
Cauerstra\ss{}e 11\\
91058 Erlangen\\
Germany\\[1ex]
\texttt{schneider@math.fau.de}
\end{minipage}

}

\end{document}